\newtheorem{defi}{Definition}
\newtheorem{conj}[defi]{Conjecture}
\newtheorem{cor}[defi]{Corollary}
\newtheorem{thr}[defi]{Theorem}
\newtheorem{lem}[defi]{Lemma}
\newtheorem{obs}[defi]{Observation}
\newtheorem{prop}[defi]{Proposition}
\newtheorem{exam}[defi]{Example}
\newtheorem{claim}[defi]{Claim}
\newcommand*{\myproofname}{Proof}
\newenvironment{claimproof}[1][\myproofname]{\begin{proof}[#1]}{\end{proof}}
\newcommand*{\floorfrac}[2]{\mathopen{}\left\lfloor\frac{#1}{#2}\right\rfloor\mathclose{}}
\DeclareMathOperator{\lcs}{lcs}
\DeclareMathOperator{\scs}{scs}
\DeclareMathOperator{\diam}{diam}
\DeclareMathOperator{\sn}{sn}
\DeclareMathOperator{\ivs}{ivs}
\DeclareMathOperator{\SUD}{SUD}
\newcounter{row}
\newcounter{col}
\newcommand\setrow[4]{
  \setcounter{col}{1}
  \foreach \n in {#1, #2, #3, #4} {
    \edef\x{\value{col} - 0.5}
    \edef\y{4.5 - \value{row}}
    \node[anchor=center] at (\x, \y) {\n};
    \stepcounter{col}
  }
  \stepcounter{row}
}
\newcommand\setrowup[9]{
  \setcounter{col}{1}
  \foreach \n in {#1, #2, #3, #4, #5, #6, #7, #8, #9} {
    \edef\x{\value{col} - 0.5}
    \edef\y{9.5 - \value{row}}
    \node[anchor=center] at (\x, \y) {\n};
    \stepcounter{col}
  }
  \stepcounter{row}
}
\title{Extremal and monotone behaviour of the Sudoku number and related critical set parameters}
\author{
Stijn Cambie\thanks{Extremal Combinatorics and Probability Group (ECOPRO), Institute for Basic Science (IBS), Daejeon, South Korea, supported by the Institute for Basic Science (IBS-R029-C4),
E-mail: {\tt stijn.cambie@hotmail.com} or {\tt stijncambie@ibs.re.kr}.}}
\date{}
\begin{document}

\maketitle
\begin{abstract}
    The Sudoku number has been defined under various names, indicating it is a natural concept. There are four variants of this parameter, that can be related to the maximum and minimum size of a critical set in a graph colouring problem.
    For each of these four related parameters, we present some simple characterizations of the graphs attaining the maximum possible values.
    As a main result, we answer a question by Cooper and Kirkpatrick, showing that there is monotone behaviour in the number of colours for only two of the four parameters.
    We investigate the monotone behaviour for the subgraph-order as well. For Latin squares and the Sudoku, we solve some variants for hypergraph colouring.
\end{abstract}

\section{Introduction}

Questions about extendability of partial colourings have been posed long ago, see e.g. the survey of Tuza~\cite{Zsolt97survey}. Here one is mainly interested in the possibility of at least one extending colouring, e.g. under the restriction that the precoloured vertices are at some of distance of each other, the latter being studied e.g. for the planar case in~\cite{Albertson98}.
We will consider cases in which the colouring can be extended in a unique way, and for this we state the definitions of a determining - and critical set.

\begin{defi}
    A \emph{determining set} of vertices in a graph $G = (V, E)$ with respect to a proper vertex colouring $c \colon V \to [k]$ is a set $S \subset V$ for which there is no proper $k$-colouring $c'$ of $G$ different from $c$ for which $c'(s)=c(s)$ for every $s \in S.$
\end{defi}

\begin{defi}
    A \emph{critical set} $S$ of vertices in a graph $G = (V, E)$ with respect to a proper vertex colouring $c : V \to [k]$ is a 
minimal determining set $S$ for the pair $(G, c)$, i.e. for every $S' \subset S$ with $S'\not=S$, there does exists a proper $k$-colouring $c'$ different from $c$ for which $c'(s)=c(s)$ for every $s \in S'.$
\end{defi}

\begin{defi}
    Given a proper $k$-colouring $c$ of $G$, a vertex $v$ is called \emph{fixed} for $(G,c)$ if $c(N[v])=[k],$ i.e. the neighbours of $v$ have been assigned all colours from $[k]$ different from $c(v).$
\end{defi}

The following basic observation will be used a few times.

\begin{obs}\label{obs:1}
    A set of fixed vertices for $(G,c)$ which are all coloured with the same colour form an independent set and for any independent set of fixed vertices, its complement is a determining set for $(G,c)$.
\end{obs}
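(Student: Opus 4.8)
The plan is to treat the two assertions in the statement separately, since each unwinds quickly from the definitions. For the first assertion I would note that it in fact requires nothing about vertices being fixed: in any proper colouring $c$, two vertices receiving the same colour cannot be adjacent, as otherwise $c$ would violate properness on that edge. Hence any monochromatic vertex set—in particular a set of fixed vertices all coloured alike—is independent, and the first claim is settled in one line.

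For the second assertion, let $I$ be an independent set of fixed vertices and put $S = V \setminus I$; the goal is to show $S$ is a determining set for $(G,c)$. So I would suppose $c'$ is a proper $k$-colouring with $c'(s) = c(s)$ for every $s \in S$, and aim to force $c' = c$. The crucial structural point is that, because $I$ is independent, every vertex $v \in I$ has all of its neighbours lying in $S$: no neighbour of $v$ can itself belong to $I$. Consequently $c'$ and $c$ agree on the entire open neighbourhood $N(v)$.

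Now I would bring in the fixed property of $v$. Since $c(N[v]) = [k]$, the neighbours of $v$ already display every colour of $[k]$ except $c(v)$, that is $c(N(v)) = [k] \setminus \{c(v)\}$. Combining this with $c'(N(v)) = c(N(v))$, the neighbours of $v$ under $c'$ exhaust all colours but $c(v)$, so properness of $c'$ leaves no choice other than $c'(v) = c(v)$. As $v \in I$ was arbitrary and $c'$ already matches $c$ on $S$, we conclude $c' = c$ on all of $V$, contradicting $c' \neq c$; thus $S = V \setminus I$ is determining.

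I do not expect a genuine obstacle here, as the statement is essentially a bookkeeping consequence of the definitions of \emph{fixed} and \emph{determining set}. The single point deserving a moment's care is the use of independence of $I$ to guarantee that each $v \in I$ sees all of its neighbours inside the complement $S$; this is exactly what allows the hypothesis ``$c'$ agrees with $c$ on $S$'' to propagate and pin down the colour of $v$.
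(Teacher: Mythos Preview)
Your proof is correct. The paper does not actually supply a proof of this observation at all (it is stated as a self-evident remark), and your argument is exactly the natural unpacking of the definitions: monochromatic vertices are independent by properness, and independence of $I$ ensures each $v\in I$ has $N(v)\subset V\setminus I$, so the fixed condition $c(N(v))=[k]\setminus\{c(v)\}$ forces $c'(v)=c(v)$.
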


Recently, Lau et al.~\cite{Lau22} used the notion $\sn(G)$ for the smallest possible critical set among all $\chi$-colourings of a graph. 
This term, called the Sudoku number, is inspired by the minimum clue number of a Sudoku.
The Sudoku number has been defined before as smallest defining set (see e.g.~\cite{M98} and references therein), smallest critical set~\cite{CK14} or forcing chromatic number~\cite{HSV07}. A similar notion is the forced matching number of graphs, see~\cite{AHM04}.
For a survey on defining/ critical sets, we refer to~\cite{DMRP03}.
The fact that it reappears in literature under various names and inspired by different applications or interests, indicates that it is a natural parameter.

In this paper, we extend the notion $\sn(G)$ to $k$-colourings, where $k \ge \chi(G).$ This will immediately address~\cite[Prob.~4.1]{Lau22}.
We do the same for three related notions defined by Cooper and Kirkpatrick~\cite{CK14}.

\begin{defi}
    The Sudoku number $\sn(G,k)$ is the minimum number of vertices in $G$ that have to be coloured in a partial colouring $c'$, such that there exists a unique $k$-colouring $c$ extending $c'$.
    When $k=\chi(G),$ we denote $\sn(G,k)$ also by $\sn(G).$
\end{defi}

\begin{defi}
    For a fixed $k$-colouring $c$ of $G$, 
    $\scs(G,c)$ and $\lcs(G,c)$ are the size of a smallest resp. largest critical set for the pair $(G,c).$
    The values $\underline{\lcs}(G,k), \overline{\scs}(G,k)$ and $\overline{\lcs}(G,k)$ are equal to respectively the minimum of $\lcs(G,c)$, maximum of $\scs(G,c)$ and maximum of $\lcs(G,c)$
    over all $k$-colourings $c$ of $G$.
\end{defi}

It is now immediate to see that if $G$ is the disjoint union of graphs $G_1, G_2, \ldots, G_m$, then
$\sn(G,k)= \sum_{i=1}^m \sn(G_i, k)$ and the same does hold for $\underline{\lcs}, \overline{\scs}$ and $\overline{\lcs}.$ 
As such, in the remaining of this paper, we are only dealing with connected graphs.
We will prefer to use $\sn(G)$ over 
$\underline{\scs}(G)$, as this highlights that it is the most natural and most studied parameter of the four numbers associated with critical sets. 

Considering $k$-colourings with $k>\chi(G)$ has been done in~\cite{D05} and was suggested in Cooper and Kirkpatrick~\cite{CK14} as well, but most research done so far was for the case $k=\chi.$
In particular, the Sudoku number of certain graphs have been determined in the past. Denoting the cartesian product of two graphs with $\square$, 
\begin{itemize}
    \item uniquely colourable graphs $G$ satisfy $\sn(G)=\chi(G)-1$ and hence $\sn(G)=1$ if $G$ is bipartite,
    \item the Petersen graph $G$ satisfies $\sn(G)=4$, 
    \item $\sn(C_{2n+1})=n+1$
and $\sn(K_2 \square C_{2n+1})=n+1$,
\item $\sn(K_n \square K_m)=m(n-m)$ if $n \ge m^2$,
\item $\sn(C_m \square K_n)=m(n-3)$ and $\sn(P_m \square K_n)=m(n-3)+2$ for $n \ge 6$,
\end{itemize}
and some more graphs such as $C_m \square K_n, P_m \square K_n, W_n, C_{2n}(K_m)$ and $C_{2n}(K^-_m).$ See~\cite{DMRP03,Lau22}.

For the traditional Sudoku graph $G$, which is the Latin square $K_9\square K_9$ with the edge set of $9$ additional $K_9$s added, it is known (among those who believe the computer-aided proof) that $\sn(G)=17$, by~\cite{MTC14}.
Latin squares are one of the leading cases and have been investigated extensively, see~\cite{HY18} and the references therein. Here it is shown that $\sn(K_n\square K_n)=\Theta(n^2)$, while it has been conjectured before that $\sn(K_n\square K_n)=\floorfrac{n^2}{4}.$

As a small example of our extension, we give the suduko numbers of the $3 \times 3$ Latin square.
\begin{exam}
$\sn(K_3 \square K_3,k)= \begin{cases}
2 & \text{ if } k=3,\\
5& \text{ if } k=4,\\
7 & \text{ if } k=5,\\
9& \text{ if } k\ge 6.\\
\end{cases}$
\end{exam}

Determining $\sn(K_n\square K_n,k)$ for most of the values $n\le k \le 2n-1$, as well as for the related Suduko versions (when $n$ is a square), would be a challenge, as it is for the case where $k=n$.

The fact that $\sn(K_n \square K_n,k)=n^2$ whenever $k \ge 2n$ is not surprising. It is a special case of the two basic extremal results proven in Section~\ref{sec:2basisresults}. 

\begin{thr}\label{thr:2BasicExtrRes}
    Let $G$ be a graph of order $n$ and $k\ge \chi(G)$ be a number.
    Then $\sn(G,k)=n$ if and only if $k>\Delta+1,$
    and $\sn(G) =n-1$ if and only if $G = K_n$.
\end{thr}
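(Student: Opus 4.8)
The plan is to treat the two statements separately, in each case reducing the evaluation of $\sn(G,k)$ to the existence of a \emph{fixed} vertex so that Observation~\ref{obs:1} applies. The trivial bound $\sn(G,k)\le n$ always holds (colour everything), so the content is to decide when one may leave at least one vertex uncoloured.

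For the easy direction of the first statement, suppose $k>\Delta+1$, i.e.\ $k\ge\Delta+2$. I would show directly that no proper subset $S\subsetneq V$ can carry a precolouring with a unique extension. Indeed, if $c$ is the unique $k$-colouring extending $c|_S$, pick any $v\notin S$; since $\deg(v)\le\Delta\le k-2$, the neighbours of $v$ forbid at most $k-2$ colours, leaving at least two admissible colours for $v$. Recolouring $v$ to a colour different from $c(v)$ yields a second proper $k$-colouring agreeing with $c$ on $V\setminus\{v\}\supseteq S$, contradicting uniqueness. Hence $\sn(G,k)=n$.

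For the converse ($k\le\Delta+1$) it suffices, by Observation~\ref{obs:1}, to produce one proper $k$-colouring with a single fixed vertex, as then the complement of that vertex is a determining set of size $n-1$. Fix a vertex $v$ of maximum degree $\Delta\ge k-1$; fixedness of $v$ means exactly $|c(N(v))|=k-1$. Starting from any proper $k$-colouring (which exists since $k\ge\chi(G)$), I would increase $|c(N(v))|$ by Kempe exchanges: if $v$ is not yet fixed there is a colour $\alpha$ absent from $N(v)$, and since $\deg(v)\ge k-1$ exceeds the number of colours on $N(v)$, some colour $\beta$ repeats there; swapping an $\{\alpha,\beta\}$-Kempe chain through one $\beta$-neighbour introduces $\alpha$ while another neighbour retains $\beta$, strictly raising the colour count. \textbf{The main obstacle} is the degenerate case where, for every such $\alpha,\beta$, \emph{all} $\beta$-coloured neighbours of $v$ lie in a single Kempe chain, so the swap merely trades $\beta$ for $\alpha$. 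I would resolve this by assuming no vertex is fixed (otherwise we are already done) and recolouring an interior $\alpha$-vertex of the chain with a colour fresh to its neighbourhood, thereby splitting the chain so that two $\beta$-neighbours of $v$ fall into different components and the ensuing swap genuinely gains a colour.

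For the second statement, note first that $\sn(G)\le n-1$ always, being the $k=\chi(G)\le\Delta+1$ instance of the first statement. If $G=K_n$, then in any proper $n$-colouring the colours form a bijection, and leaving any two (necessarily adjacent) vertices uncoloured admits exactly the two extensions obtained by assigning the two unused colours in either order; so no determining set is smaller than $n-1$, giving $\sn(K_n)=n-1$. For connected $G\ne K_n$ I would prove $\sn(G)\le n-2$ by exhibiting, in some proper $\chi$-colouring, two \emph{non-adjacent} fixed vertices, whose complement has size $n-2$ and is determining by Observation~\ref{obs:1}. Since $G\ne K_n$ forces $\chi(G)\le n-1$, any $\chi$-colouring has a colour class of size $\ge 2$, an obvious source of independent (same-colour) vertices. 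When $\chi=\Delta+1$, Brooks' theorem forces $G$ to be an odd cycle (the only connected non-complete possibility), which I would dispatch directly. The delicate remaining case is $\chi\le\Delta$: here the maximum-degree vertex has a repeated colour on its neighbourhood, and I expect to exploit this slack through the same extremal/Kempe analysis as above to fix a second vertex; \emph{guaranteeing that the two fixed vertices can be chosen non-adjacent} is the crux of this part.
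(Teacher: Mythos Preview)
Your easy directions are fine and agree with the paper. The two converse directions both have genuine gaps, and in each case the paper takes a different (and shorter) route.

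\textbf{First statement, $k\le\Delta+1$.} The chain-splitting step does not work as written: the $\{\alpha,\beta\}$-Kempe component containing the $\beta$-neighbours of $v$ may be $2$-connected (for instance a $4$-cycle $w_1a_1w_2a_2$ with $w_1,w_2\in N(v)$ coloured $\beta$ and $a_1,a_2$ coloured $\alpha$), so recolouring a single interior $\alpha$-vertex need not separate $w_1$ from $w_2$, and the subsequent swap still just trades $\beta$ for $\alpha$. The paper never touches Kempe chains. For $k=\Delta+1$ it constructs the colouring directly: assign the $\Delta$ neighbours of a maximum-degree vertex $u$ pairwise distinct colours (automatically proper on $N(u)$), greedily extend to $G\setminus u$, and $u$ is then fixed. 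For $\chi\le k\le\Delta$ it runs a minimal-counterexample argument: peel off a vertex of degree $<k$ (the hypothesis $k\le\Delta+1$ persists for the smaller graph) and induct; once $\delta\ge k$, take any $\chi$-colouring, choose a vertex $v$ fixed for it (one exists per colour since $\chi$ is optimal), and recolour $k-\chi$ of its repeated-colour neighbours with the globally unused colours $\chi+1,\dots,k$. Those colours appear nowhere, so this recolouring is trivially proper and $v$ becomes fixed for the resulting $k$-colouring.

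\textbf{Second statement, $G\ne K_n$.} You commit to producing two \emph{non-adjacent} fixed vertices and correctly flag this as the crux; but this target is too strong to be attainable. Take $G=K_k$ with a single pendant attached: in every $\chi$-colouring the fixed vertices are exactly those of the clique, so no two are non-adjacent and your plan cannot succeed on this graph. The paper sidesteps the independence requirement. In an arbitrary $\chi$-colouring pick one fixed vertex $v_i$ per colour $i$. If two of them are non-adjacent, apply Observation~\ref{obs:1} as you do. If $\{v_1,\dots,v_k\}$ is a clique, connectivity together with $G\ne K_n$ gives some $v_i$ a neighbour $u$ outside the clique, say with $c(u)=j\ne i$. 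Uncolour $v_i$ and $v_j$: the colour of $v_i$ is forced first (its still-coloured neighbours $u$ and the $v_\ell$ with $\ell\ne i,j$ already exhaust $[k]\setminus\{i\}$), and then $v_j$ is forced by the restored clique. This two-step forcing is the idea your outline is missing; note also that because it works for \emph{every} $\chi$-colouring, the paper actually gets the stronger conclusion $\overline{\scs}(G)\le n-2$.
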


The latter result solves~\cite[Conj.~4.1]{Lau22}, which was also proved by Pokrovskiy~\cite{P22}\footnote{The two independent proofs were communicated to Lau et al, on the same day.}.
We also consider these two basic extremal results for the other $3$ related parameters.
In Section~\ref{sec:n-2} we investigate the $\sn(G) =n-2$ case.

One would expect that $\sn(K_n\square K_n,k)$ is non-decreasing in terms of $k$.
In Section~\ref{sec:monotonedecreasing_colours}, we observe that $\overline{\scs}(G,k)$ and $\overline{\lcs}(G,k)$ are monotone non-decreasing in $k$ for every graph $G$, while this is not the case for $\sn(G,k)$ and $\underline{\lcs}(G,k).$ This answers a question of~\cite{CK14}.
We also consider this question about the monotone behaviour of the parameters for the subgraph-relation, under the (necessary) condition of fixed chromatic number. This is done in Section~\ref{sec:monotonedecreasing_subgraph}.
Finally, in Section~\ref{sec:hypergraph} we consider hypergraph-versions and determine the parameters for the Suduko and Latin squares (which are hard and interesting cases in the graph case).

\newpage

\section{Two basic extremal results for the maximum possible Sudoku number}\label{sec:2basisresults}

In this section, we prove the two basic extremal results of Theorem~\ref{thr:2BasicExtrRes} and extend with some additional characterizations for $\overline{\scs}, \underline{\lcs}$ and $\overline{\lcs}$.

\subsection{Large Sudoku numbers when $k=\chi$}

\begin{thr}\label{thr:snG=n-1}
    For a graph $G$ of order $n$, $\sn(G) = n - 1$ if and only if $G = K_n$.
    Also $\overline{\scs}(G)=n-1$ if and only if $G = K_n$.
\end{thr}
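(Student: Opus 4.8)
The plan is to prove both equivalences at once by reducing everything to a single upper bound. For the direction $G=K_n$, I would first check directly that $\sn(K_n)=\overline{\scs}(K_n)=n-1$: in the essentially unique proper $n$-colouring every vertex receives a distinct colour, so deleting any two vertices $u,w$ lets them swap colours, whence no $(n-2)$-subset is determining; on the other hand deleting a single vertex leaves it forced, so $n-1$ is simultaneously the smallest and (over the unique colouring) the largest critical set size. Since $\sn(G)=\underline{\scs}(G)\le\overline{\scs}(G)$ always, the whole theorem follows once I establish the contrapositive of the other direction: for a connected $G\neq K_n$ one has $\overline{\scs}(G)\le n-2$. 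Concretely, I must show that for \emph{every} proper $\chi(G)$-colouring $c$ there is a determining set of size at most $n-2$, i.e. two vertices whose deletion still forces $c$.

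The engine is a small lemma: in any $\chi$-colouring, every colour class contains a fixed vertex. Indeed, if a class $V_i$ had none, then---$V_i$ being independent---I could recolour each of its vertices to a colour missing from its neighbourhood (necessarily $\neq i$), simultaneously and without creating conflicts, thereby eliminating colour $i$ and contradicting $\chi$-minimality. Writing $F$ for the set of fixed vertices, this gives $|F|\ge\chi\ge 2$ (connectedness and $G\neq K_n$ force $\chi\ge 2$), with $F$ meeting every colour class. I then split on whether $F$ is a clique. If $F$ is \emph{not} a clique, I pick two non-adjacent fixed vertices; they form an independent set of fixed vertices, so by Observation~\ref{obs:1} their complement is a determining set of size $n-2$, finishing this $c$.

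The main case, and the main obstacle, is when $F$ \emph{is} a clique. Then its vertices have pairwise distinct colours, so $|F|\le\chi$, and combined with the lemma $|F|=\chi$ with exactly one fixed vertex $f_a$ of each colour $a$. Here no two fixed vertices are independent, so Observation~\ref{obs:1} alone is useless; instead I would delete an \emph{edge} $\{f_a,f_b\}$ of $F$ and recover $c$ by a forcing order: reveal $V\setminus\{f_a,f_b\}$, force one endpoint, then the other. Forcing $f_a$ first (with $f_b$ hidden) succeeds precisely when $f_a$ has a neighbour other than $f_b$ of colour $b$, after which the full neighbourhood of $f_b$ is revealed and it too is forced. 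So the pair $\{f_a,f_b\}$ resists deletion in both orders only if $f_b$ is the unique colour-$b$ neighbour of $f_a$ and $f_a$ the unique colour-$a$ neighbour of $f_b$. If this failure held for \emph{all} pairs in $F$, then since $F$ already carries every colour, each $f_a$ would have exactly one neighbour of each colour $\neq a$, namely the corresponding $f_b$, and none of colour $a$, so $N(f_a)=F\setminus\{f_a\}$. Then $F$ would be a connected component inducing $K_\chi$, forcing $V=F$ and $G=K_n$, a contradiction. Hence some edge of $F$ is deletable, giving a determining set of size $n-2$.

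Combining the cases yields $\scs(G,c)\le n-2$ for every $\chi$-colouring $c$, hence $\overline{\scs}(G)\le n-2$ and a fortiori $\sn(G)\le n-2$, completing the contrapositive; the disconnected case is immediate from additivity of $\overline{\scs}$ over components. I expect the delicate point to be exactly the clique case: the naive use of Observation~\ref{obs:1} collapses, and one must combine forcing sequences along an edge with the colour-class lemma to turn the failure of all such deletions into the rigid structure $N(f_a)=F\setminus\{f_a\}$, and thence into $G=K_n$.
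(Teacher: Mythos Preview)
Your proposal is correct and follows essentially the same approach as the paper: show every $\chi$-colouring has a fixed vertex of each colour, then split on whether the fixed vertices form a clique, using Observation~\ref{obs:1} in the non-clique case and a two-step forcing along a clique edge in the clique case. The only cosmetic differences are that you work with the full set $F$ of fixed vertices (deducing $|F|=\chi$ in the clique case) and phrase the clique case as a contradiction, whereas the paper selects one fixed vertex per colour up front and, in the clique case, directly picks a clique vertex with an external neighbour $u$ and uncolours that vertex together with the clique vertex sharing $u$'s colour; the underlying forcing argument is identical.
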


\begin{proof}
    One direction is easy. Since $K_n$ is a uniquely colourable graph, we know  $\sn(K_n)=\overline{\scs}(K_n)=n-1.$
    So now assume $G$ is not equal to $K_n$.
    Take an arbitrary $k$-colouring $c$ of $G$, with $k=\chi(G)$.
    Since no $(k-1)$-colouring does exist, for every colour $i$, there is a fixed vertex coloured $i$.
    Select a critical vertex $v_i$ for every colour $i$.
    If $\{v_1,\ldots,v_k\}$ forms a clique $K$, since $G$ itself is not a clique, there is a vertex, without loss of generality $v_1$, with a neighbour $u$ outside $K$.
    Let $j$ ($\not=1$) be the colour associated with $u$.
    If we uncolour $v_1$ and $v_j$, then $v_1$ can only be coloured with $1$ and next $v_j$ only with $j$, so $\scs(G,c) \le n-2.$

    If $\{v_1,\ldots,v_k\}$ does not form a clique, e.g. $v_1$ and $v_2$ are not connected, then this $k$-colouring restricted to $G\backslash\{v_1,v_2\}$ is uniquely extendable and so again $\scs(G,c) \le n-2$.
    Since this is true for every $k$-colouring $c$, we conclude that $\sn(G)=\underline{scs}(G) \le \overline{\scs}(G)<n-1$. This proves uniqueness of the extremal graph.
\end{proof}

The characterization of the graphs $G$ for which $\overline{\lcs}(G) = n - 1$ or $\underline{\lcs}(G) = n - 1$ is different.
In the first case, e.g. all odd cycles satisfy this equality. In the second case both $C_5$ and the wheel graph $W_6$, as well as $K_k$ (for $k\ge 3$) with a path connected to one of its vertices satisfy $\underline{\lcs}(G) = n - 1$.

\begin{thr}\label{thr:olcsG=n-1}
    For a graph $G$ of order $n$, $\overline{\lcs}(G) = n - 1$ if and only if $G$ has a vertex $v$ for which $\chi(G \backslash v)=\chi(G)-1=\deg(v).$
\end{thr}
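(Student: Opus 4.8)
The plan is to characterise, for a \emph{fixed} $\chi$-colouring $c$ (with $\chi=\chi(G)$), when the set $V\setminus\{v\}$ of size $n-1$ is a critical set, and then to optimise over $c$. The basic equivalence I will use is that $V\setminus\{v\}$ is determining iff $v$ is fixed for $(G,c)$: colouring everything but $v$ leaves a unique completion exactly when $c(N[v])=[\chi]$. Since determining sets are upward closed, $V\setminus\{v\}$ is moreover \emph{critical} iff for every $u\neq v$ the smaller set $V\setminus\{u,v\}$ is not determining, i.e.\ some proper $\chi$-colouring agrees with $c$ off $\{u,v\}$ but differs from $c$. Throughout I invoke the fact already extracted inside the proof of Theorem~\ref{thr:snG=n-1}, that in any $\chi$-colouring every colour class contains a fixed vertex. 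This also shows $\overline{\lcs}(G)\le n-1$: the full set $V$ is never minimal, because any fixed vertex may be discarded from it while the set stays determining.

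For the backward direction I assume $\deg(v)=\chi-1=\chi(G\setminus v)$. First I observe that in \emph{every} proper $(\chi-1)$-colouring $\psi$ of $G\setminus v$ the neighbourhood $N(v)$ is rainbow: otherwise a colour is missing among the $\chi-1$ vertices of $N(v)$, and assigning it to $v$ would extend $\psi$ to a $(\chi-1)$-colouring of $G$, contradicting $\chi(G)=\chi$. I then colour $v$ with the fresh colour $\chi$; crucially, in the resulting colouring $c$ the colour $\chi$ is used \emph{only} on $v$. Now $v$ is fixed, so $V\setminus\{v\}$ is determining, and I claim it is minimal. A vertex $u\neq v$ that is not fixed can be recoloured on its own, so $V\setminus\{u,v\}$ is not determining. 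A fixed $u\neq v$ must see colour $\chi$ in its neighbourhood, which only $v$ supplies, so $u\in N(v)$; as $N(v)$ is rainbow, $u$ is the unique neighbour of $v$ of its colour, and exchanging the colours of $u$ and $v$ produces a genuinely different proper colouring. Hence $V\setminus\{v\}$ is a critical set of size $n-1$, and with the bound above we get $\overline{\lcs}(G)=n-1$.

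For the forward direction I fix a $\chi$-colouring $c$ realising $\overline{\lcs}(G)=n-1$ together with a critical set $V\setminus\{v\}$, and write $A=c(v)$; then $v$ is fixed, so $\deg(v)\ge\chi-1$. I first show $\chi(G\setminus v)=\chi-1$. Every $w\neq v$ coloured $A$ is non-adjacent to $v$, so $v$ stays fixed after deleting $w$; minimality applied to $u=w$ forces $w$ to be recolourable, hence non-fixed. Recolouring all $A$-coloured vertices of $G\setminus v$ (an independent set) to available colours different from $A$ gives a proper colouring of $G\setminus v$ using only $[\chi]\setminus\{A\}$, so $\chi(G\setminus v)\le\chi-1$, and equality follows. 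It remains to prove $\deg(v)=\chi-1$. Assuming $\deg(v)\ge\chi$, fixedness and pigeonhole give a colour $a\neq A$ appearing on at least two neighbours of $v$; for such a redundant neighbour $u$, deleting $u$ still leaves $N(v)$ covering every colour $\neq A$, so $v$ is pinned to $A$, and minimality then forces $u$ to admit an alternative colour on its own, i.e.\ every $a$-coloured neighbour of $v$ is non-fixed. But the colour class $a$ owns a fixed vertex $f_a$, which must therefore lie outside $N(v)$; deleting $f_a$ and $v$ pins $v$ to $A$ and $f_a$ to $a$, so $V\setminus\{f_a,v\}$ is determining, contradicting minimality. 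Hence $\deg(v)=\chi-1$ and $v$ witnesses the stated condition.

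I expect the main obstacle to be precisely this final step, ruling out $\deg(v)\ge\chi$: examining any single misbehaving neighbour does not suffice, and the argument closes only by combining the local recolourability forced by minimality at the redundant neighbours with the global guarantee that each colour class contains a fixed vertex. The second delicate point is the choice of colouring in the backward direction, where insisting that the extra colour $\chi$ appear on $v$ alone is exactly what drives every other fixed vertex into $N(v)$ and makes the colour-exchange available.
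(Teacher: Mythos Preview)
Your proof is correct and follows essentially the same strategy as the paper's: construct the colouring giving $v$ the fresh colour for the backward direction, and for the forward direction combine minimality of $V\setminus\{v\}$ with the existence of a fixed vertex in every colour class to force $\deg(v)=\chi-1$ and $\chi(G\setminus v)=\chi-1$. The only cosmetic difference is that in ruling out $\deg(v)\ge\chi$ you argue the doubled-colour neighbours are non-fixed and push the required fixed vertex $f_a$ \emph{outside} $N(v)$ to get the contradiction, whereas the paper first shows every fixed vertex lies \emph{inside} $N(v)$ and contradicts from there; both routes hit the same determining set $V\setminus\{f_a,v\}$.
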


\begin{proof}
    Denote $k=\chi(G).$
    If $G$ has a vertex $v$ as in the statement, we can colour $G \backslash v$ with $k-1$ colours and $v$ needs to get the additional colour. Let $c$ be such a colouring.
    Note that $v$ its neighbours are coloured with the $k-1$ different colours (otherwise $\chi(G)=k-1$).
    We claim that $V \backslash v$ is a critical set for $(G,c).$
    Uncolouring a neighbour $u$ of $v$ as well, would imply that we can switch the colour of $u$ and $v$.
    Uncolouring a nonneighbour $u$ of $v$, would imply that $u$ can be coloured with either $c(u)$ or $k$. 
    So indeed $V \backslash v$ is a critical set for $c$.
    
    In the other direction, assume $\overline{\lcs}(G) = n - 1$.
    This implies that there exists a colouring $c$ of $G$ and a vertex $v$ for which $V\backslash v$ is a critical set for $(G,c).$
    In particular the neighbours of $v$ received the $k-1$ colours different from $c(v)$.
    Without loss of generality, we have $c(v)=k.$
    For every colour $i \in [k-1]$, there is at least one fixed vertex $u \in V$ for which $c(u)=i$ which could not be recoloured, i.e. for which the colourneighbourhood is $c(N(u))=[k]\backslash i.$
    Since $V\backslash v$ is a critical set, each such $u$ is a neighbour of $v$.
    If $\deg(v)>k-1$, there would be a colour $i$ appearing twice in $N(v)$. Uncolouring a fixed (for $(G,c)$) neighbour $u$ of $v$ which has been assigned that colour $i$, would lead to a uniquely extendable precolouring, since $c(v)$ is still known and thereafter $c(u)$ is known.
    Thus $\deg(v)=k-1.$
    Since every fixed vertex belongs to $N[v]$ (since $V\backslash v$ is a critical set), all vertices different from $v$ can be recoloured with a colour in $[k-1]$ if necessary and thus $\chi(G \backslash v)=k-1.$
\end{proof}

A similar characterization of all graphs for which $\underline{\lcs}(G) = n - 1$ seems to be much harder. We give two examples of families of graphs satisfying this equality. 

\begin{exam}
    Let $G=K_n \backslash C_5$, where $n \ge 5.$
    Then $\omega(G)=n-3<n-2=\chi(G)$ and $\underline{\lcs}(G) = n - 1$.
\end{exam}

\begin{proof}
    Let $c$ be any $(n-2)-$colouring of $G$.
    The $n-5$ vertices with degree $n-1$ all need to be coloured with a different colour.
    The other $5$ vertices are coloured with $3$ colours and one of its vertices, $v$, is coloured with a unique colour.
    Now $V \backslash v$ is a critical set for $(G,c),$ since for every other vertex $v'$ we would also have a proper colouring if we switch the colours $c(v)$ and $c(v').$
\end{proof} 

\begin{exam}
    Let $G=(V,E)$ be a graph for which there is a subset $U \subset V$ such that $G[U]=K_k$, $\lvert N(U) \rvert + \# \{ u \in U \mid \deg(u)>k-1 \} <k$ and $\deg(v)<k-1$ for every $v \in V \backslash U.$
    Then $\underline{\lcs}(G) = n - 1.$
\end{exam}

\begin{proof}
    Since the degeneracy of $G$ equals $k-1$ and the clique number $k$, $\chi(G)=k.$
    Let $c$ be an arbitrary proper $k$-colouring of $G$.
    The condition $\lvert N(U) \rvert < \# \{ u \in U \mid \deg(u)=k-1 \} $ implies that there is some vertex $u \in U$ for which $ \deg(u)=k-1 $ and $c(u)$ does not appear as a colour on $N(U).$
    We claim that $V\backslash u$ is a critical set for $(G,c).$
    It is a determining set since the $K_k$ gets $k$ different colours.
    Every critical set contains $V\backslash U$, since the colour of these vertices cannot be determined as they have degree less than $k-1.$
    Finally, for every $u'\in U$ with $u \not= u'$ we can switch $c(u)$ and $c(u')$ to get an other proper colouring by the choice of $u$.
    Hence $V\backslash \{u,u'\}$ is not a determining set.
    As $c$ was taken arbitrary, we conclude that $\underline{\lcs}(G) = n - 1.$
\end{proof}

Even while this is an infinite family, it may be that there are not so many graphs for which $\omega(G)<\chi(G)$ do satisfy $\underline{\lcs}(G) = n - 1$.

\subsection{Largest Sudoku numbers for $k>\chi$}

When $k>\chi(G),$ it is possible that there are no fixed vertices for a pair $(G,c)$ and the critical set attains the maximum order $n$.
In this subsection, we state the (necessary and sufficient) conditions for $\sn(G,k)=n$ and the three analogues.

\begin{thr}\label{thr:snGk=n}
    Let $G$ be a graph and $k\ge \chi(G)$ be a number.
    Then $\sn(G,k)=n$ if and only if $k>\Delta+1.$
    Also $\underline{\lcs}(G,k)=n$ if and only if $k>\Delta+1.$
\end{thr}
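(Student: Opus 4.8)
The plan is to prove both ``iff'' statements at once using the chain $\sn(G,k)\le\underline{\lcs}(G,k)\le n$, which holds for every graph: for each colouring $c$ one has $\scs(G,c)\le\lcs(G,c)$, and critical sets are subsets of $V$, so of size at most $n$. Because of this chain it suffices to establish two implications. First, if $k>\Delta+1$ then $\sn(G,k)=n$, which then forces $\underline{\lcs}(G,k)=n$ too. Second, if $k\le\Delta+1$ then $\underline{\lcs}(G,k)\le n-1$, which then forces $\sn(G,k)\le n-1$. For the first implication, take $k\ge\Delta+2$ and any partial colouring on a proper subset $S\subsetneq V$ admitting an extension $c$, and pick $v\notin S$. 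Since $\deg(v)\le\Delta\le k-2$, the neighbours of $v$ use at most $k-2$ colours, so at least two colours of $[k]$ are available at $v$; recolouring $v$ to a second available colour yields a different proper $k$-colouring agreeing with the partial colouring on $S$. Hence no proper subset of $V$ is a determining set, i.e.\ $\sn(G,k)=n$.

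For the second implication the key reduction is that $\underline{\lcs}(G,k)\le n-1$ holds as soon as \emph{some} proper $k$-colouring $c$ has a fixed vertex $v$. Indeed, by Observation~\ref{obs:1} the set $V\setminus v$ is then already a determining set, so $V$ is not a minimal determining set, every critical set for $(G,c)$ has size at most $n-1$, and as this concerns one colouring it bounds the minimum $\underline{\lcs}(G,k)$. So it remains to produce, whenever $k\le\Delta+1$, a proper $k$-colouring in which some vertex is fixed.

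To this end I would fix a vertex $v$ of maximum degree, so that $\deg(v)=\Delta\ge k-1$, and seek a proper $k$-colouring $c$ with $c(v)=k$ in which $N(v)$ realises all $k-1$ colours of $[k-1]$, which makes $v$ fixed. Starting from any proper $k$-colouring (one exists as $k\ge\chi(G)$), relabelled so that $c(v)=k$, I would choose among all such colourings one maximising the number of distinct colours appearing on $N(v)$. If this maximum equals $k-1$ we are done. Otherwise some colour $j\in[k-1]$ is absent from $N(v)$, and since $|N(v)|=\Delta\ge k-1$ exceeds the number of colours used there, some colour $i$ occurs at least twice on $N(v)$. The natural move is to swap an $(i,j)$-Kempe chain through an $i$-coloured neighbour of $v$: as $c(v)=k\notin\{i,j\}$ the vertex $v$ is untouched, the chosen neighbour becomes $j$, and if a second $i$-coloured neighbour survives the swap, the colour count on $N(v)$ strictly increases, contradicting maximality.

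The main obstacle is exactly the degenerate configuration in which all $i$-coloured neighbours of $v$ lie in a single $(i,j)$-Kempe chain, so that the swap merely trades $i$ for $j$ and leaves the count unchanged. I expect to resolve this by refining the extremal choice of colouring, for instance selecting among all maximisers one that additionally minimises the number of vertices coloured with a missing colour (or minimises the total size of the relevant Kempe chains), so that the degenerate case can be ruled out or itself improved; one may also exploit the freedom in choosing which repeated colour $i$ and which absent colour $j$ to use. Closing this case analysis cleanly is the crux of the argument, while the rest of the proof is routine.
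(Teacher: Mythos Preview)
Your reduction is sound: the chain $\sn\le\underline{\lcs}\le n$ correctly collapses both equivalences into (i) $k>\Delta+1\Rightarrow\sn(G,k)=n$ and (ii) $k\le\Delta+1\Rightarrow$ some proper $k$-colouring has a fixed vertex. Part (i) is fine. The gap is entirely in (ii), and you name it yourself: when your maximising colouring has $t<k-1$ colours on $N(v)$, the Kempe swap can stall because all $i$-coloured neighbours of $v$ may lie in one $(i,j)$-chain, and none of the secondary minimisations you sketch is actually shown to break the tie---``I expect to resolve this'' is not a proof. Kempe-chain improvement of local colour diversity is notoriously delicate, and insisting on making a \emph{pre-chosen} vertex fixed is already stronger than what (ii) requires. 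To see the danger of prescribing the vertex, drop the max-degree hypothesis for a moment: in the graph on $\{v,u_1,u_2,u_3,w_1,w_2\}$ with $v$ joined to each $u_i$, each $u_i$ joined to both $w_j$, and $w_1w_2$ an edge, one has $\deg(v)=3=k-1$ for $k=4$, yet every $4$-colouring forces all $u_i$ into the at most two colours outside $\{c(v),c(w_1),c(w_2)\}$, so $v$ is never fixed. Your $v$ has max degree, so this is not a direct counterexample, but it shows that the ``make this particular $v$ fixed'' programme is fragile and that your Kempe-plus-tiebreaker outline does not obviously close.

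The paper avoids Kempe chains entirely. For $k=\Delta+1$ it does what you would do: give the $\Delta$ neighbours of a max-degree vertex $u$ pairwise distinct colours and extend greedily to $G\setminus u$; then $u$ is forced. For $\chi\le k\le\Delta$ it argues by minimal counterexample on $\lvert V\rvert$. If some vertex $v$ has degree $<k$, delete it (still $k\le\Delta(G\setminus v)+1$), obtain by minimality a determining set of size at most $n-2$ in $G\setminus v$, and add $v$ back to the precolouring. Otherwise $\delta(G)\ge k$: take any $\chi$-colouring, pick a vertex $w$ already fixed for that colouring (one exists since no $(\chi-1)$-colouring does), and---because $\deg(w)\ge k>\chi-1$---recolour $k-\chi$ of the repeated-colour neighbours of $w$ with the unused colours $\chi+1,\dots,k$, producing a proper $k$-colouring in which $w$ is fixed. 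The crucial difference is that $w$ is \emph{discovered} from the $\chi$-colouring rather than prescribed in advance, which is exactly what lets the argument sidestep the Kempe obstacle you are stuck on.
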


\begin{proof}
    One direction is trivial. If $k>\Delta+1$, then one can greedily extend any $k$-colouring, with at least two options in every step of assigning a colour to an uncoloured vertex. This implies that the only possible critical set is $V$.
    
    Next, we prove the other direction. We do this first for the case where $k=\Delta+1.$ This is an easy case, since one can pick one vertex $u$ for which $\deg(u)=\Delta(G)$ and colour its neighbours with $\Delta$ different colours. This colouring can be greedily extended to a $k$-colouring of $G \backslash u.$ By construction, there is only one colour available for $u$ and as such $\sn(G,k)<n.$

    Now assume $G$ is a minimal counterexample to the statement, i.e. $\chi(G) \le k \le \Delta(G)$ and $\sn(G,k)=n.$
    Let $v$ be a vertex whose degree is strictly smaller than $k$, if there is such a vertex. Since $k \le \Delta(G \backslash v)+1,$ we know that (at least one component of) $G \backslash v$ is not a counterexample and as such there is precolouring with at most $n-2$ vertices coloured, which has a unique extension.
    Since $\deg(v)<k,$ we can assign a colour to $v$ (if that colour is unique, we do not have to do) such that the precolouring on $G$ can be extended in only one way.
    Finally, if there is no vertex $v$ with $\deg(v)<k$, then one can take a $\chi$-colouring of $G$.
    Since there is no colouring with $\chi-1$ colours, there is a vertex $v$ such that its neighbourhood contains all $\chi$ colours different from its own colour.
    Since $\deg(v) \ge k$, we can recolour some of its neighbours with the $k-\chi$ unused colours, in such a way that that every additional colour appears precisely once and $N(v)$ is coloured with $k-1$ different colours.
    If we now uncolour $v$, then this can be extended only in one way, i.e. $V \backslash v$ is a determining set. So $G$ was not a counterexample and we conclude.
\end{proof}

Note that as a corollary of Theorem~\ref{thr:snG=n-1} and~\ref{thr:snGk=n}, we also know the characterization of all graphs (not necessarily connected), for which $\sn(G)=n-1.$ It is the union of a complete graph $K_k$ and some graphs with maximum degree at most $k-2.$

We also observe that the characterization is different when the parameter is $\overline{\scs}$ or $\overline{\lcs}.$

\begin{thr}\label{thr:overline=n}
    Let $G$ be a graph and $k\ge \chi(G)$ be a number.
    Then $\overline{\scs}(G,k)=n$ (idem dito for $\overline{\lcs}(G,k)=n$) does hold if and only if $k>\chi.$
\end{thr}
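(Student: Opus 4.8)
The plan is to prove Theorem~\ref{thr:overline=n}, which states that $\overline{\scs}(G,k)=n$ (and likewise $\overline{\lcs}(G,k)=n$) holds if and only if $k>\chi(G)$.

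First I would handle the backward direction, assuming $k>\chi(G)$, and produce a single $k$-colouring $c$ whose \emph{smallest} critical set already has size $n$; since $\overline{\scs}$ takes the maximum over colourings of the smallest critical set, and $\scs(G,c)\le\overline{\scs}(G,c)\le\overline{\lcs}(G,c)\le\overline{\lcs}(G,k)\le n$, exhibiting one colouring with $\scs(G,c)=n$ forces both parameters to equal $n$. The natural construction is to take a proper colouring using $\chi(G)$ colours on the vertices but to employ the extra available colour(s) to destroy every fixed vertex. Concretely, I would argue that since $k\ge\chi(G)+1$, for any vertex $v$ we have $k-1\ge\chi(G)>\deg(v)$ is \emph{not} guaranteed, so I cannot simply appeal to Theorem~\ref{thr:snGk=n}; instead the key point is that when $k>\chi(G)$ there is slack to recolour. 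I would take a colouring $c$ in which \emph{no vertex is fixed}: starting from a $\chi$-colouring, one checks that with an extra colour available every vertex $v$ has at least one colour in $[k]$ unused on $N(v)$ other than $c(v)$, because its neighbourhood uses at most $\deg(v)\le k-1$ colours when the colouring is chosen carefully (e.g.\ a proper colouring where colour $k$ is used sparingly or not at all). If no vertex is fixed, then by Observation~\ref{obs:1} (its contrapositive) there is no way to force any single vertex, so removing any vertex from a precolouring allows an alternative extension, whence the only determining set is $V$ itself and $\scs(G,c)=n$.

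The forward direction is the contrapositive: if $k=\chi(G)$, I must show $\overline{\scs}(G,k)<n$ and $\overline{\lcs}(G,k)<n$. Here the essential observation, already exploited in the proof of Theorem~\ref{thr:snG=n-1}, is that when $k=\chi(G)$ \emph{every} $k$-colouring $c$ has at least one fixed vertex for each colour class, because no $(k-1)$-colouring exists. Fix any colouring $c$ and pick a fixed vertex $v$; then uncolouring $v$ still leaves a uniquely extendable precolouring, since $c(N(v))=[k]\setminus c(v)$ forces the colour of $v$. Hence $V\setminus v$ is a determining set, so the largest critical set has size at most $n-1$, giving $\lcs(G,c)\le n-1$ for every $c$ and therefore $\overline{\lcs}(G,k)\le n-1<n$; the bound for $\overline{\scs}$ follows since $\overline{\scs}(G,k)\le\overline{\lcs}(G,k)$.

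I expect the main obstacle to be the backward direction: constructing a colouring with \emph{no} fixed vertex requires care, since a careless proper colouring may still fix some vertices, and one must verify that the extra colour genuinely provides an escape for every vertex simultaneously. The clean way is to argue that a proper $k$-colouring exists in which one colour class is used on no more than the minimum necessary vertices, or alternatively to directly show that if $k>\chi(G)$ one can always recolour to avoid all fixings; I would make this rigorous by observing that a vertex $v$ is fixed exactly when $c(N(v))=[k]\setminus\{c(v)\}$, which demands $\deg(v)\ge k-1\ge\chi(G)$, and then showing that among all $k$-colourings at least one avoids this condition at every vertex by a greedy recolouring argument that spends the surplus colour to break any incipient fixing. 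Once fixedness is ruled out everywhere, Observation~\ref{obs:1} delivers $\scs(G,c)=n$ immediately.
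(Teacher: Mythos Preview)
Your overall structure matches the paper's, and the forward direction ($k=\chi$) is exactly the paper's argument: every $\chi$-colouring has a fixed vertex in each colour class, so $V\setminus\{v\}$ is always a determining set and $\overline{\lcs}(G,\chi)\le n-1$.

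For the backward direction you land on the right construction but then bury it under unnecessary worries. The paper's proof is one line: when $k>\chi$, take \emph{any} $\chi$-colouring $c$ and view it as a $k$-colouring. Since the colour $k$ never appears anywhere, it is absent from $c(N[v])$ for every vertex $v$; hence no vertex is fixed, $V\setminus\{v\}$ is never a determining set, and the unique critical set is $V$. That is all---no degree considerations, no recolouring procedure. Your aside that ``its neighbourhood uses at most $\deg(v)\le k-1$ colours'' is unjustified (nothing prevents $\deg(v)\ge k$) and irrelevant; the point is simply that $c$ uses only colours from $[\chi]$, so colour $k$ is a universal alternative. Likewise, the greedy recolouring argument you sketch in the final paragraph is not needed and would only complicate matters. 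Finally, the appeal to Observation~\ref{obs:1} is slightly off: what you actually use is the direct equivalence ``$v$ is fixed $\iff$ $V\setminus\{v\}$ is determining,'' which follows immediately from the definitions rather than from that observation.
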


\begin{proof}
    It was already known (e.g. by \cite[Prop.~4]{CK14}) that if $k=\chi$, $\overline{\scs}(G,k) \le \overline{\lcs}(G,k)\le n-1$. The latter is true since for every $\chi$-colouring and colour $i \le \chi$, there is a vertex with colour $i$ which cannot be recoloured (as otherwise $\chi(G) \le \chi-1$, contradiction).
    Now if $k>\chi$, take any $\chi$-colouring.
    In that case the only critical set is the whole set.
\end{proof}

\section{A strengthening of the extremal result for the Sudoku number}\label{sec:n-2}

In Theorem~\ref{thr:snG=n-1}, we gave a short proof for an elementary characterization of the graphs with largest Sudoku number, being precisely the cliques.
Except from odd cycles, Brook's theorem states that cliques are the only graphs attaining the maximum possible chromatic number in terms of the maximum degree. Reed~\cite{Reed99} considered the up to one extremal case where $\chi=\Delta$ and concluded that for large $\Delta$, $K_{\Delta}$ has to be a subgraph of a graph satisfying that equality.
We conjecture, that except from $2$ sporadic counterexamples, the same behaviour is true when considering the equality $\sn(G)=n(G)-2.$

\begin{conj}\label{conj:sn=n-2_no_Kn}
    The cycle $C_5$ and the Moser-spindle (drawn in Figure~\ref{fig:moserspindle}) are the only $2$ graphs for which $\sn(G)=n(G)-2$ which are not perfect, i.e. $\omega(G)<\chi(G).$
\end{conj}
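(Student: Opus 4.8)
The plan is to read the conjecture as a classification of connected imperfect graphs (those with $\omega(G)<\chi(G)$) attaining $\sn(G)=n-2$. Write $k=\chi(G)$ and $n=n(G)$. Since an imperfect graph is never complete, Theorem~\ref{thr:snG=n-1} already gives $\sn(G)\le n-2$, so the whole content is the matching lower bound: one must show that for every optimal $k$-colouring $c$ of $G$ there is no determining set of size $n-3$, i.e.\ one can never uncolour three vertices and still force the colouring. Thus the task is to prove that every connected imperfect graph in which three vertices can be jointly forced in some optimal colouring is either $C_5$ or the Moser spindle; the hard direction is ruling out all other graphs.

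First I would dispose of the boundary case via Brooks' theorem. An imperfect graph is not complete, so either it is an odd cycle or $k\le\Delta$. For odd cycles the formula $\sn(C_{2m+1})=m+1$ forces $m+1=2m-1$, i.e.\ $m=2$ and $G=C_5$. Hence from now on I may assume $k\le\Delta$, so that $G$ has vertices of degree at least $k$.

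The engine of the argument is the following necessary condition coming from Observation~\ref{obs:1}: if some optimal colouring $c$ admitted three pairwise non-adjacent fixed vertices, their complement would be a determining set of size $n-3$ and $\sn(G)\le n-3$. Therefore $\sn(G)=n-2$ forces that in \emph{every} optimal colouring the set of fixed vertices has independence number at most $2$. Now in every optimal colouring each of the $k$ colour classes contains at least one fixed vertex, and a rainbow set of fixed vertices can never form a clique because $\omega(G)<k$. A set of fixed vertices with independence number $\le 2$ and clique number $<k$ must, by the Ramsey bound $R(3,k)$, have bounded size; combined with the fact that a vertex of degree $\le k-2$ has at least two free colours and so can never be forced (it lies in every determining set), this should pin down both the value of $k$ and the global size of $G$. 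Concretely, I would aim to show that for $k\ge 5$ one can always recolour to relocate fixed vertices and expose an independent triple (or a sequentially forceable triple), contradicting $\sn(G)=n-2$; this is the analogue, in the Sudoku setting, of Reed's result that $\chi=\Delta$ forces a $K_\Delta$. That would leave only $k\in\{3,4\}$ together with an explicit bound on $n$, reducing the problem to a finite family of graphs with $\omega=k-1$, which can be enumerated and checked to isolate exactly $C_5$ (for $k=3$) and the Moser spindle (for $k=4$).

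The main obstacle is the global control over \emph{all} optimal colourings simultaneously. Because $\sn(G)$ is a minimum over colourings, to defeat a candidate graph one must \emph{exhibit} a single optimal colouring admitting three forceable vertices, and the adversarial graph may resist the obvious constructions; moreover forcing need not proceed through an independent set of fixed vertices but can be sequential, so the condition ``no forceable triple'' is strictly weaker than ``the fixed vertices have independence number $\le 2$'' and must be analysed dynamically. Turning the Ramsey bound into a genuine contradiction for large $k$, and certifying that no sporadic imperfect example hides among the finitely many small cases beyond $C_5$ and the Moser spindle, is exactly where the difficulty (and the reason this remains a conjecture) lies.
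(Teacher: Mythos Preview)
The statement is a \emph{conjecture}; the paper does not prove it either, so there is no full proof to match. What the paper supplies is concrete partial progress, and your sketch overlaps with but falls short of it.

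Your opening reduction via Observation~\ref{obs:1} (no three pairwise non-adjacent fixed vertices in any optimal colouring) is exactly the paper's entry point, and your closing diagnosis that sequential forcing, not just independent fixed triples, is the real obstruction is correct. From the same independence-$\le 2$ constraint, however, the paper extracts a sharper structural lemma: $\ivs_\chi(G)=1$, i.e.\ some single vertex whose deletion lowers $\chi$. The argument is not Ramsey-theoretic but a matching argument on the $2k$ fixed vertices (two per colour); the union of the forced matchings between colour pairs is either two disjoint $K_k$'s, handled by a sequential-forcing trick, or has independence number $\ge 3$. Using $\ivs_\chi=1$ the paper then \emph{completely} settles $k=3$ via an odd-girth analysis; your Brooks' step disposes only of odd cycles, but triangle-free graphs with $\chi=3$ that are not odd cycles still need to be eliminated, and your general scheme does not do that. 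Finally the paper reduces what remains to $k\ge 4$ with $\delta(G)\ge k-1$ and stops there.

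Your Ramsey step, as written, does not bite: a rainbow set of $k$ fixed vertices with independence number $\le 2$ and clique number $<k$ only yields $k<R(3,k)$, which is vacuous, so no bound on $k$ follows and the promised reduction to ``finitely many small cases'' is not actually achieved. The paper's matching argument is a strictly more productive way to exploit the $\alpha\le 2$ constraint on fixed vertices; if you want to push further toward the conjecture, you should replace the Ramsey heuristic by that lemma and start from the paper's reduced hypotheses $k\ge 4$, $\delta\ge k-1$, $\ivs_\chi=1$.
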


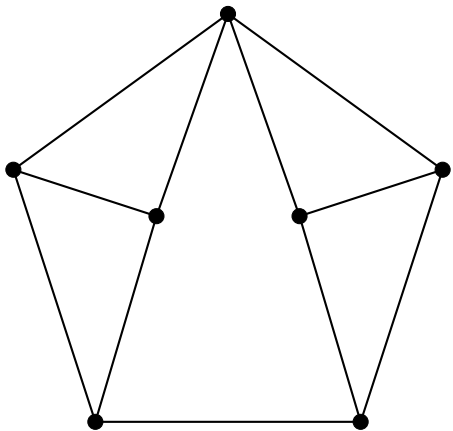
\begin{figure}[h]
\begin{center}

    \begin{tikzpicture}
    {
    \foreach \x in {0,1,2,3,4}{
	\draw[thick] (\x*72-54:3) -- (\x*72+18:3);
	\draw[fill] (\x*72-54:3) circle (0.1);
	}
	
	\foreach \x in {0,1,2}{
	\draw[thick] (\x*72-54:3) -- (18:1);
	\draw[thick] (\x*72+90:3) -- (216-54:1);
	}
	
	\draw[fill] (72-54:1) circle (0.1);
    \draw[fill] (216-54:1) circle (0.1);

	}
	\end{tikzpicture}
\end{center}
\caption{The Moser-spindle}\label{fig:moserspindle}
\end{figure}

Theorem~\ref{thr:snG=n-1} did also hold for $\overline{\scs}$, but we observe that there are infinitely many graphs for which $\omega(G)<\chi(G)$ satisfying $\overline{\scs}(G)=n-2.$ As such, we will focus on the $\sn$ case in the remaining of this section.

\begin{prop}
    There are infinitely many graphs $G$ with $\omega(G)<\chi(G)$ for which $\overline{\scs}(G)=n-2.$
\end{prop}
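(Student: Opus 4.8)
The plan is to take the odd cycles $C_{2t+1}$ with $t \ge 2$ as the infinite family, each of which satisfies $\omega(C_{2t+1}) = 2 < 3 = \chi(C_{2t+1})$. For every such cycle I would exhibit a single proper $3$-colouring $c$ whose smallest critical set has size $n(G)-2$, and then invoke Theorem~\ref{thr:snG=n-1} for the matching upper bound: since $C_{2t+1}\neq K_{2t+1}$, that theorem already yields $\overline{\scs}(C_{2t+1})\le n(G)-2$, so it suffices to produce one colouring attaining this value. Note also that a smallest critical set is simply a smallest \emph{determining} set, since a determining set of minimum size is automatically minimal and hence critical.

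The central ingredient is a characterization of which uncolourings of a cycle admit a unique extension. Writing $C_{2t+1}=v_1v_2\cdots v_{2t+1}$ and fixing a proper $3$-colouring $c$, I claim that $c$ restricted to $V\setminus S$ extends uniquely if and only if $S$ is an independent set all of whose vertices are fixed for $(G,c)$. One direction is immediate: if $S$ is independent, then each $v\in S$ sees both of its (still coloured) neighbours, and it is forced back to $c(v)$ exactly when those two neighbours carry distinct colours, i.e. exactly when $v$ is fixed. For the converse I would show that a maximal run of $\ell\ge 2$ consecutive uncoloured vertices always has at least two completions: the number of proper $3$-colourings of such a run with prescribed endpoint colours is an entry of the $(\ell+1)$-th power of the adjacency matrix $J-I$ of $K_3$, and a short eigenvalue computation (eigenvalues $2,-1,-1$) shows every such entry is at least $2$ once $\ell\ge 2$. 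Consequently $\scs(C_{2t+1},c)=(2t+1)-\alpha(c)$, where $\alpha(c)$ is the largest size of an independent set of fixed vertices.

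It then remains to choose a colouring with $\alpha(c)=2$. I would take the near-alternating $c$ defined by $c(v_i)=1$ for odd $i\le 2t-1$ and $c(v_i)=2$ for even $i\le 2t-1$, together with $c(v_{2t})=2$ and $c(v_{2t+1})=3$, which is proper. Checking the fixed condition $c(v_{i-1})\neq c(v_{i+1})$ vertex by vertex shows that the only fixed vertices are $v_1,v_{2t},v_{2t+1}$; these induce a path (only $v_1$ and $v_{2t}$ are non-adjacent), so $\alpha(c)=2$. Hence $\scs(C_{2t+1},c)=(2t+1)-2=n(G)-2$, giving $\overline{\scs}(C_{2t+1})\ge n(G)-2$, and with the upper bound this is an equality for every $t\ge 2$. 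The main obstacle is the characterization lemma, and within it the verification that every long uncoloured run genuinely admits several completions; the endpoint-prescribed path count is the one place requiring care, but the transfer-matrix argument dispatches it cleanly.
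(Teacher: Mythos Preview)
Your proof is correct and takes a genuinely different route from the paper's. Both arguments invoke Theorem~\ref{thr:snG=n-1} for the upper bound $\overline{\scs}(G)\le n-2$ and then exhibit a single colouring $c$ with $\scs(G,c)\ge n-2$, but the constructions diverge. The paper blows up a $C_5$ by replacing four of its vertices with cliques $K_p$ and $K_q$, obtaining for each pair $(p,q)$ a graph with $\omega=p+q<\chi=p+q+1$; for the natural colouring it then argues directly that any critical set can omit at most two vertices (one per ``side'' of the blown-up cycle). You instead stay with the odd cycles $C_{2t+1}$ themselves and prove the clean structural lemma that for a proper $3$-colouring of a cycle, $V\setminus S$ is determining if and only if $S$ is an independent set of fixed vertices; this immediately yields $\scs(C_{2t+1},c)=n-\alpha(c)$, and your near-alternating colouring with exactly three fixed vertices forming a $P_3$ pins $\alpha(c)=2$.

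What each approach buys: the paper's family has unbounded chromatic number, so it shows the phenomenon is not confined to $\chi=3$, whereas your examples all have $\chi=3$. On the other hand, your argument is more transparent and yields a reusable formula $\scs(C_n,c)=n-\alpha(c)$ for cycles, with the transfer-matrix step (entries of $(J-I)^{\ell+1}$ being at least $2$ for $\ell\ge 2$) handling the only non-obvious case neatly. The verification that the fixed vertices of your colouring are exactly $\{v_1,v_{2t},v_{2t+1}\}$ and that their induced subgraph is a $P_3$ is straightforward for $t\ge 2$, so the argument goes through as written.
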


\begin{proof}
Let $C_5$ be a cycle with vertices $v,w,x,y$ and $z$, in this order.
    Let $G=(V,E)$ be this cycle, where $w$ and $y$ are blown-up with a $K_p$, and $x$ and $z$ are blown-up with a $K_q.$
    Then clearly $\chi(G)=p+q+1$ (observe that the independence number $\alpha(G)=2$).
    Let $c$ be the colouring of $G$ for which the vertices in the two $K_p$s are coloured with $[p]$, the vertices in the $K_q$s with $[p+1..p+q]$ and the remaining vertex $v$ with $p+q+1.$
    Any critical set has to contain all vertices in the cliques corresponding with $x$ and $y$, as these vertices can be coloured with both its current colour and the colour $p+q+1.$
    It is not hard to see that $V \backslash v$ is a critical set, since if some vertex in the cliques corresponding with $w$ or $z$ would not belong to it as well, one can switch the colours of $v$ and that vertex.
    Since a critical set $S$ cannot miss more than one vertex of a clique (otherwise colours can be switched), we find that $\lvert S \rvert \ge n-2.$ By Theorem~\ref{thr:snG=n-1}, this implies that $\overline{\scs}(G)=n-2.$
\end{proof}

In the case of Reed's strengthening of Brook's theorem, when $G$ is a graph for which $K_{\Delta} \subset G$ we also know that $\chi(G) \ge \Delta$ and the characterization is done.
This is not the case in our setting, 

The following list contains examples of graphs $G$ for which $\omega(G)=\chi(G)=k$ and $\sn(G)=n-2$ :
    \begin{itemize}
        \item $G$ contains a clique $K_k$ such that all vertices in the clique have degree at most $k$ and the other vertices have degree at most $k-2$
        \item $G$ contains a clique $K_k$, which is connected by a bridge (cut-edge) to $G\backslash K_k$, where all vertices of $V(G) \backslash V( K_k)$ have degree at most $k-2$, except the endvertex of the bridge which has degree $k-1$
        \item $G$ contains a clique $K_k$, all the vertices of this clique except from one have degree $k-1$ and all vertices in  $V(G) \backslash V( K_k)$ have degree at most $k-2$
        \item $G$ is the union of a clique $K_k$ and a complete bipartite graph $K_{a,b}$, where all vertices in the partition class of size $a$ are connected with $c$ vertices of the clique $K_k$, where $2\le c$ and $c+b\le k-2.$
        Here $a$ can be arbitrary large.
    \end{itemize}

Note that the last example shows that $k \le \Delta$ does not imply that there is a vertex with maximum degree which can be fixed for some colouring.

In the remaining of this section, we will give some directions towards Conjecture~\ref{conj:sn=n-2_no_Kn}.
Our first lemma, will use the definition of independent chromatic vertex stability number (defined in~\cite{ABKM22}). 

\begin{defi}
     The size of the smallest independent set $S$ such that $\chi(G \backslash S)<\chi(G)$ is the \textit{independent chromatic vertex stability number}, $\ivs_\chi(G)$.
\end{defi}

It is important to note that $\ivs_{\chi}(G)=1$ does not imply that the graph is colour-critical, as it implies that there is one (but not all) vertex $v$ for which $\chi(G \backslash v)<\chi(G).$

\begin{lem}
    If a graph $G$ satisfies $\sn(G)=n-2$, then $\ivs_{\chi}(G)=1.$
\end{lem}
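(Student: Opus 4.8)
The plan is to prove the contrapositive in the form: if $\ivs_\chi(G)\ge 2$, then $\sn(G)\le n-3$. This suffices, because $\ivs_\chi(G)\ge 2$ forces $G\neq K_n$ (in $K_n$ every single deletion lowers $\chi$), so Theorem~\ref{thr:snG=n-1} already gives $\sn(G)\le n-2$, and establishing $\sn(G)\le n-3$ then rules out $\sn(G)=n-2$. First I would record the standard reduction: $\ivs_\chi(G)=1$ holds iff some optimal colouring has a colour class of size $1$ (equivalently, some $v$ has $\chi(G\setminus v)=\chi-1$), since a singleton class at $v$ turns $c|_{G\setminus v}$ into a proper $(\chi-1)$-colouring, and conversely a $(\chi-1)$-colouring of $G\setminus v$ extended by a fresh colour on $v$ is optimal with $v$ alone in its class. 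Consequently $\ivs_\chi(G)\ge 2$ means that every optimal colouring of $G$ has all of its colour classes of size at least $2$.

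Fix one such optimal colouring $c$. The goal becomes to uncolour three vertices while keeping a unique extension, i.e.\ to find a set $T$ with $|T|\ge 3$ for which $V\setminus T$ is a determining set; then $\sn(G)\le n-3$. The key sufficient condition I would use is: $V\setminus T$ is determining whenever every $t\in T$ satisfies $c(N(t)\setminus T)=[k]\setminus\{c(t)\}$, that is, every colour other than $c(t)$ still appears on a neighbour of $t$ lying outside $T$. Indeed, under this condition each $t$ is individually forced back to $c(t)$ in any proper extension, so $c$ is the only extension. Note that $T$ need not be independent: in $K_{2,2,2}$ (where $\ivs_\chi=2$) one takes one vertex from each of the three colour classes, and each chosen vertex still sees the two other colours on the surviving vertices of the other classes, whereas Observation~\ref{obs:1} alone would only yield an uncoloured set of size $2$.

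With this reduction in hand, I would construct $T$ as follows. If $G$ has three pairwise non-adjacent fixed vertices, they form such a $T$ directly by Observation~\ref{obs:1}. Otherwise I would select representatives from three distinct colour classes and use the fact that every class has size at least $2$ to retain, for each representative and each foreign colour, a witnessing neighbour outside $T$. The main obstacle is exactly the tight case: a vertex whose neighbourhood is a rainbow of size precisely $k-1$ (so deleting any single neighbour destroys its rainbow property) cannot freely be paired with a neighbour placed in $T$. To control this I would first pass to a well-chosen optimal colouring — for example a Kempe-stable one, or one maximising a suitable balancedness measure among the colourings all of whose classes have size $\ge 2$ — so that each colour occurs at least twice in the relevant neighbourhoods and the witnessing-neighbour requirement can always be met. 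Handling these tight vertices, and verifying that such a favourable colouring can always be reached when $\ivs_\chi(G)\ge 2$, is where I expect the real work to lie.
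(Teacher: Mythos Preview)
Your framing is correct and matches the paper's: argue the contrapositive, and for any optimal colouring produce a set $T$ of size at least $3$ whose complement is determining. Your sufficient condition ``each $t\in T$ keeps a full rainbow in $N(t)\setminus T$'' is fine, as is the equivalence $\ivs_\chi(G)=1\iff$ some optimal colouring has a singleton class. But the proposal stops precisely where the content is: you do not construct $T$. Saying you would ``pass to a Kempe-stable colouring'' or ``maximise a balancedness measure'' is a gesture, not an argument; you have not shown that any such colouring exists in which three representatives can be chosen so that every foreign colour survives in each of their neighbourhoods after deletion. As you yourself note, the tight case of a rainbow vertex of degree exactly $k-1$ is the obstacle, and nothing in the proposal handles it.

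The idea you are missing is to work not with arbitrary representatives but with \emph{fixed} vertices per colour class. The set $F_i$ of fixed vertices of colour $i$ is independent, and removing $F_i$ lets one greedily recolour the remaining (non-fixed) vertices of colour $i$, so $\ivs_\chi(G)\le\abs{F_i}$. Hence $\ivs_\chi(G)\ge 2$ forces $\abs{F_i}\ge 2$ for every $i$, while Observation~\ref{obs:1} gives $\abs{F_i}\le 2$ under $\sn(G)=n-2$; thus every colour has \emph{exactly two} fixed vertices. Now for any two colours $i\neq j$, either some fixed $j$-vertex is non-adjacent to both fixed $i$-vertices (giving an independent triple of fixed vertices and hence $\sn\le n-3$), or the four fixed vertices span a perfect matching. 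The union of these matchings on the $2k$ fixed vertices then either contains an independent triple (done) or decomposes into two vertex-disjoint copies of $K_k$; in the latter case connectivity yields a vertex $v$ in one $K_k$ with an outside neighbour $u$, and uncolouring $v$, the vertex of that $K_k$ coloured $c(u)$, and any vertex $\ne u$ of the other $K_k$ gives a sequentially forced extension, again $\sn\le n-3$. This structural dichotomy on the fixed vertices is what replaces your undeveloped ``well-chosen colouring'' step.
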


\begin{proof}
   Let $k=\chi(G).$ Observe that we can assume $k \ge 3$ since the cases $k \in \{1,2\}$ are trivial (remember that $G$ is connected and $\sn(G)=1$ when $G$ is bipartite).
   If there is a $k$-colouring $c$ such that there is some colour $i$ for which there are at least $3$ fixed vertices coloured with $i$ , we would have $\sn(G)< n-2$ (by Observation~\ref{obs:1}).
   This immediately implies that $\ivs_{\chi}(G)\le 2.$
   If $\ivs_{\chi}(G)=2$ and $\sn(G)=n-2$, then for every $k$-colouring and every $i \in [k]$, there are exactly $2$ fixed vertices of every colour.
   Let $1 \le i ,j \le k$ be two different colours. If there would be a fixed vertex coloured $j \not=i$ that is connected to none of the two fixed vertices coloured $i$, the complement of these three vertices would be a critical set, in which case $\sn(G)< n-2$.
   Hence for every $1 \le i<j \le k,$ $G$ contains a matching between the two fixed vertices coloured $i$ and the two fixed vertices coloured $j$.
   The union of these matchings (restricted to the $2k$ fixed vertices) either gives two cliques $K_k$, or has independence number at least $3.$
   In the first case, since $G$ is connected, there is at least one vertex $v$ in a $K_k$ with an additional neighbour $u$.
   Uncolouring the vertex in that $K_k$ with the same colour as $u$, the vertex $v$, and a vertex in the other $K_k$ (different from $u$), the colouring was uniquely extendable and thus $\sn(G)\le n-3.$
   If the independence number was at least $3$, the independent set corresponds to a set of vertices for which its complement is a critical set (by Observation~\ref{obs:1}), from which $\sn(G)\le n-3$ would follow again.
\end{proof}

\begin{prop}
    The cycle $C_5$ is the only $K_3$-free graph for which $\chi(G)=3$ and $\sn(G)=n-2.$
\end{prop}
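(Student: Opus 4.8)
The plan is to show both that $C_5$ attains the value and that it is the only such graph. For the first part, the formula $\sn(C_{2n+1})=n+1$ quoted in the introduction gives $\sn(C_5)=3=5-2$, and $C_5$ is triangle-free with $\chi(C_5)=3$, so it qualifies. For uniqueness I would take a connected triangle-free $G$ with $\chi(G)=3$ and $G\neq C_5$, and prove $\sn(G)\le n-3$, contradicting $\sn(G)=n-2$. The working tool, generalising Observation~\ref{obs:1}, is the following forcing criterion: a set $R\subseteq V$ is \emph{removable} for a colouring $c$ (meaning $V\setminus R$ is a determining set for $(G,c)$) whenever its vertices can be ordered $r_1,\dots,r_m$ so that each $r_i$ already sees both colours different from $c(r_i)$ among $V\setminus\{r_i,r_{i+1},\dots,r_m\}$; then restoring $r_1,\dots,r_m$ in this order is forced, so the extension is unique. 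Observation~\ref{obs:1} is the special case where $R$ is an independent set of equally coloured fixed vertices. Thus it suffices to exhibit, for some $3$-colouring of $G$, a removable set of size at least $3$.

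Next I would invoke the preceding lemma: since $\sn(G)=n-2$ we have $\ivs_{\chi}(G)=1$, so there is a vertex $v$ with $G\setminus v$ bipartite; write $(A,B)$ for its parts. As $G$ is not bipartite, every odd cycle passes through $v$, and a shortest one is induced; denote it $C=v\,x_1\cdots x_{2\ell}$ with $2\ell+1\ge 5$, where the path $x_1\cdots x_{2\ell}$ alternates between $A$ and $B$, so $v$ has a neighbour on each side. I would first dispose of the case $\ell\ge 3$. Here $C$ alone admits a proper $3$-colouring (a near-periodic $1,2,3$ pattern) in which three pairwise non-adjacent vertices of a common colour are fixed within $C$, exactly as in the computation for $C_7$. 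Since fixedness is monotone under adding edges, it is then enough to extend this colouring of $C$ to a proper $3$-colouring of all of $G$; the three chosen vertices remain fixed and equally coloured, hence form a removable set of size $3$ and $\sn(G)\le n-3$. Guaranteeing such an extension (using that $C\setminus v$ is a path inside the bipartite $G\setminus v$, along which colours propagate) is one of the delicate points.

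It remains to treat $\ell=2$, i.e.\ when the only shortest odd cycles are induced copies of $C_5$ through $v$; fix one, $C=v\,x_1x_2x_3x_4$ with $x_1,x_3\in A$ and $x_2,x_4\in B$. Because $G\neq C_5$ and $G$ is connected, there is an edge or a vertex outside $C$, and I would argue that any such extra structure yields a removable triple. Using the colouring $A\to1,\ B\to2,\ v\to3$ (so $v$ is the unique, fixed, colour-$3$ vertex) together with local recolourings, I would split into cases according to how the extra structure attaches: an extra vertex adjacent into $C$ typically creates a third fixed vertex of colour $1$ or $2$, giving three equally coloured fixed vertices via Observation~\ref{obs:1}; a pendant is handled by the propagation trick, colouring it with the colour that, combined with a suitable neighbour of its support, forces the support vertex (so that a non-independent triple can still be forced, as happens for $C_5$ with a pendant attached); and more distant attachments are reached through the bipartition $(A,B)$.

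The main obstacle is this last case analysis together with the extendability claim in the $\ell\ge3$ case: in both places one must verify not merely that sufficiently many fixed vertices exist, but that three uncoloured vertices can be \emph{simultaneously} forced, i.e.\ that the chosen removable set genuinely admits a forcing order and that the underlying colouring extends to all of $G$. I expect the bipartite description of $G\setminus v$ to be the key device throughout, since it lets one read off the neighbourhoods on each side and control precisely which vertices become fixed after a recolouring.
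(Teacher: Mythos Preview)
Your skeleton---invoke the lemma $\ivs_\chi(G)=1$ to get a vertex $v$ with $G\setminus v$ bipartite, then split on the odd girth---is exactly the paper's. The gaps are in the execution of each branch, and in both places you have made the problem harder than it needs to be.

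For odd girth $\ge 7$ you propose to $3$-colour the cycle $C$ first with a near-periodic pattern and then extend to $G$; you correctly flag the extension as ``delicate'', and indeed a vertex off the cycle could have neighbours on the path $x_1\cdots x_{2\ell}$ of all three colours, so this step genuinely needs work. The paper sidesteps this entirely by colouring in the opposite order: take the bipartition colouring $A\to 1$, $B\to 2$, set $c(v)=3$, and then \emph{recolour a single further vertex} $u\in V(C)$ at cycle distance $\ge 3$ from $v$ to colour $3$. This is automatically proper (the only colour-$3$ vertices are $v$ and $u$, and they are non-adjacent since $C$ is induced), and the cycle neighbours of $v$ and the cycle neighbour(s) of $u$ far from $v$ are three pairwise non-adjacent fixed vertices, so Observation~\ref{obs:1} finishes. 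No extension argument is required at all.

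For odd girth $5$ your ``any extra structure yields a removable triple'' is not yet a proof. The paper replaces the open-ended case analysis by a clean dichotomy: either $G$ contains exactly one $C_5$, in which case some cycle vertex has an off-cycle neighbour and an explicit recolouring (two vertices receiving colour $3$) gives a determining set of size $n-3$; or $G$ has several odd cycles, in which case one locates a vertex $v$ on all of them with at least three neighbours lying on odd cycles, and in the colouring $A\to 1$, $B\to 2$, $c(v)=3$ those three neighbours are fixed and (by triangle-freeness) independent. Your proposed attachment-by-attachment analysis would have to reinvent exactly these two mechanisms; organising by ``one odd cycle vs.\ several'' is the missing idea.
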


\begin{proof}
    Since $\chi(G)=3$ and $\ivs_{\chi}=1,$ there is a vertex $v$ (possible multiple) that belong to all odd cycles.
    Consider the odd-girth of the graph $g$ and let $C_g$ be a cycle (it is induced) of girth $g$.
    If $g$ is at least $7$, one can colour $v$ with $3$ and $G \backslash v$ with $\{1,2\}.$
    Now take a vertex $u$ on $C_g$ at distance at least $3$ form $v$ and colour it in $3$ as well.
    For this colouring, the two neighbours of $v$ on $C_g$ and the neighbour(s) of $u$ on $C_g$ which are at distance at least $3$ from $v$ are all fixed. So (by Observation~\ref{obs:1} again) $\sn(G) \le n-3.$
    
    Next, assume $G$ contains one odd cycle $C_5$, with vertices $u,v,w,x,y$ in this order, and is not $C_5$ itself. Then at least one vertex, without loss of generality $u$, has degree at least $3$ and thus an other neighbour $z$.
    Let $c$ be a $2$-colouring of $G\backslash v$ and (re)colour $y$ and $v$ with $3.$
    Then $V \backslash \{u,v,x\}$ is a determining set for $(G,c)$ and as such $\sn(G) \le n-3.$

    If $G$ has odd-girth $5$ and contains multiple cycles, there is a vertex $v$ (belonging to all odd cycles, i.e. $\chi(G \backslash v)=2$) with at least $3$ neighbours belonging to some odd cycle.
    Let $v$ be a vertex belonging to all odd cycles. If only two of its neighbours belong to some odd cycles, they belong to all odd cycles as well and we can repeat the search (and we have to repeat at most twice).
    Having found such a vertex, we colour $G \backslash v$ with $[2]$ and colour $v$ with $3$. Then those (at least) $3$ neighbours belonging to odd cycles, form a fixed set for the colouring and we conclude again, as the complement is a determining set with at most $n-3$ vertices.
\end{proof}

\begin{lem}
    It is sufficient to consider graphs for which $\omega(G)<\chi(G)=k\ge 4$ and $\delta(G) \ge k-1$ and prove that the Moser-spindle is the only one with $\sn(G)=n-2.$
\end{lem}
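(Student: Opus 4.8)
The plan is to establish the reduction by induction on $n$, assuming the restricted characterisation and deducing the full conjecture for $\chi\ge 4$. The range $\chi\le 3$ is already covered: a connected graph with $\omega(G)<\chi(G)$ has $\chi(G)\ge 3$, and the case $\chi(G)=3$ (which forces $G$ to be $K_3$-free) was just handled, yielding only $C_5$. So I may assume $\omega(G)<\chi(G)=k\ge 4$ and $\sn(G)=n-2$, and the entire task is to show that such a $G$ may be taken with $\delta(G)\ge k-1$. The whole argument rests on a single deletion step applied to a vertex of small degree.

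First I would record the fact that drives the deletion: if $\deg(v)\le k-2$, then in every proper $k$-colouring of $G$ the colour-neighbourhood of $v$ omits at least two colours, so $v$ can always be recoloured and therefore lies in every determining set. I then claim that deleting such a $v$ preserves every relevant parameter. The chromatic number cannot fall, since a $(k-1)$-colouring of $G\backslash v$ would extend to $v$ (which meets at most $k-2$ colours), contradicting $\chi(G)=k$; hence $\chi(G\backslash v)=k$, while $\omega(G\backslash v)\le\omega(G)<k$. For the Sudoku number I would prove $\sn(G\backslash v)=n-3$ by two-sided estimates: lifting any determining set $S'$ of $(G\backslash v,c')$ to $S'\cup\{v\}$ (the precoloured $v$ only adds constraints) gives $\sn(G)\le\sn(G\backslash v)+1$, whence $\sn(G\backslash v)\ge n-3$; and since $G\backslash v$ is not a clique (else $\omega=k$), Theorem~\ref{thr:snG=n-1} gives $\sn(G\backslash v)\le n-3$. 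Finally, a short computation combining the additivity of $\sn$ over components with Theorem~\ref{thr:snG=n-1} rules out a disconnection (two clique-components would force $\omega=k$), so $G\backslash v$ is again a connected graph with $\omega<\chi=k\ge 4$ and $\sn=(n-1)-2$.

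With the deletion step in place the induction is immediate: if $\delta(G)\ge k-1$ we finish by the assumed restricted statement, and otherwise we delete a vertex of degree $\le k-2$ to obtain a smaller graph of exactly the same type, which by the inductive hypothesis is the Moser spindle. The remaining — and I expect hardest — step is then to rule this case out, i.e. to show that attaching a single vertex of degree at most $k-2=2$ to the Moser spindle produces a graph with $\sn(G)\le n-3$, contradicting $\sn(G)=n-2$. This is a finite verification: one runs over the (few, by the spindle's symmetry) proper $4$-colourings of the Moser spindle and over the ways of joining the new vertex to one or two spindle-vertices, and in each case exhibits a colouring together with a determining set leaving three vertices uncoloured. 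Observation~\ref{obs:1} and the earlier fact that $\ivs_\chi(G)=1$ for any graph with $\sn=n-2$ should cut down the configurations that must be inspected. Once this check is completed, no graph of the given type has $\delta(G)\le k-2$, so every such graph falls under the restricted statement and equals the Moser spindle, which is the desired reduction.
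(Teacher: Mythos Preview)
Your approach is essentially the paper's: delete a vertex of degree at most $k-2$, observe that the invariants $\omega<\chi=k$ and $\sn=n-2$ pass to the smaller graph, apply induction to reach the Moser spindle, and then carry out the finite check on single-vertex extensions of the spindle.

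One step does not work as written: your argument that $G\setminus v$ is connected (``two clique-components would force $\omega=k$'') is not valid. Additivity of $\sn$ together with Theorems~\ref{thr:snG=n-1} and~\ref{thr:snGk=n} only yields that $G\setminus v$ has a \emph{unique} component $C_j$ with $\chi(C_j)=k$ and $\sn(C_j)=n_j-2$, while any further components $C_i$ satisfy $\sn(C_i,k)=n_i$, i.e.\ $\Delta(C_i)\le k-2$; nothing in that computation forbids such extra components. The repair is easy and stays inside your framework: apply the inductive hypothesis to the connected graph $C_j$ (so $C_j$ is the Moser spindle and $k=4$), note that since $\deg(v)\le 2$ and $v$ meets every component, $v$ has exactly one neighbour in $C_j$, and then invoke the very same finite spindle-plus-pendant check to a colouring of $C_j\cup\{v\}$, extended arbitrarily to the rest, to obtain $\sn(G)\le n-3$. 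The paper's own proof simply does not mention connectedness, so once this point is fixed your write-up is actually the more careful of the two.
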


\begin{proof}
    We only have to focus on graphs with $\chi(G) \ge 4$ since the case $\chi=3$ has been proven and $\chi=2$ is trivial.
    Let $v$ be a vertex for which $\deg(v) \le k-2$, then $\sn(G \backslash v)=n'-2$ as well, since if not, the colouring $c$ of $G \backslash v$ (observe that $\chi(G \backslash v)=k$) can be extended to $G$ and the critical set for $(G \backslash v,c)$ of size at most $n'-2$ as well to a determining set of size at most $n-2.$ 
    %So if the Moser-spindle is the only counterexample with minimum degree at least $k-1$, counterexamples with smaller minimum degree reduces to this one by deleting vertices with small degree and in the final step, $ G \backslash v $ is the Moser-spindle.
    So the only situation in which the statement of the lemma is false, is if there would exist a graph $G$ and vertex $v$ for which $\deg(v) \in \{1,2\}$,
    $ G \backslash v$ is the Moser-spindle and $\sn(G)=n-2.$
    This cannot be the case.
    It has been presented for the possible cases where $\deg(v)=1$ in Figure~\ref{fig:extensionsMoserspindle}, with a determining (critical) set of size at most $n-3$ being presented in black. The case $\deg(v)=2$ can be done by considering $7$ cases, $6$ of them are obtained by adding one of the dotted lines.
    The final case is true as well, and left as a little puzzle for the reader.
\end{proof}

\begin{figure}[h]
\begin{minipage}[b]{.31\linewidth}
\begin{center}
\scalebox{0.8}{
    \begin{tikzpicture}
    {
    \foreach \x in {0,1,2,3,4}{
	\draw[thick] (\x*72-54:3) -- (\x*72+18:3);
	\draw[fill] (\x*72-54:3) circle (0.1);
	}
	
	\foreach \x in {0,1,2}{
	\draw[thick] (\x*72-54:3) -- (18:1);
	\draw[thick] (\x*72+90:3) -- (216-54:1);
	}
	\draw[thick] (0,-1) -- (18-144:3);
	\draw[dotted] (0,-1) -- (18-72:3);
	\draw[fill] (72-54:1) circle (0.1);
    \draw[fill] (216-54:1) circle (0.1);
    \draw[fill] (0,-1) circle (0.1);
   	\coordinate [label=center: \textcolor{red}{4}] (A) at (144-54:3.3);
	 \coordinate [label=center: \textcolor{red}{2}] (A) at (-54:3.3);
     \coordinate [label=center: 3] (A) at (18:3.3);
     \coordinate [label=center: 3] (A) at (144+18:3.3);
    \coordinate [label=center: 1] (A) at (18:0.7);

\coordinate [label=center: 1] (A) at (0,-0.7);
\coordinate [label=center: \textcolor{red}{4}] (A) at (-72-54:3.3);
\coordinate [label=center: 2] (A) at (-144-54:0.7);
}

	\end{tikzpicture}}
\end{center}
\end{minipage}\quad
\begin{minipage}[b]{.31\linewidth}
\begin{center}\scalebox{0.8}{
    \begin{tikzpicture}
    {
    \foreach \x in {0,1,2,3,4}{
	\draw[thick] (\x*72-54:3) -- (\x*72+18:3);
	\draw[fill] (\x*72-54:3) circle (0.1);
	}
	
	\foreach \x in {0,1,2}{
	\draw[thick] (\x*72-54:3) -- (18:1);
	\draw[thick] (\x*72+90:3) -- (216-54:1);
	}
	\draw[thick] (72-54:3) -- (2.8,3);
	
	\draw[fill] (72-54:1) circle (0.1);
    \draw[fill] (216-54:1) circle (0.1);
    
   \draw[fill] (2.8,3) circle (0.1);
    
    \draw[dotted] (144-54:3) -- (2.8,3);
	\draw[dotted] (216-54:3) -- (2.8,3);
	\draw[dotted] (288-54:3) -- (2.8,3);
	\draw[dotted] (-54:3) -- (2.8,3);
	
	\coordinate [label=center: 4] (A) at (144-54:3.3);
	 \coordinate [label=center: 3] (A) at (-54:3.3);
     \coordinate [label=center: \textcolor{red}{2}] (A) at (18:3.3);
     \coordinate [label=center: \textcolor{red}{3}] (A) at (144+18:3.3);
    \coordinate [label=center: \textcolor{red}{1}] (A) at (18:0.7);

\coordinate [label=center: 1] (A) at (3.1,2.5);
\coordinate [label=center: 2] (A) at (-72-54:3.3);
\coordinate [label=center: 1] (A) at (-144-54:0.7);
	}
	\end{tikzpicture}}
\end{center}
\end{minipage}\quad\begin{minipage}[b]{.31\linewidth}
\begin{center}
    \scalebox{0.8}{
    \begin{tikzpicture}
    {
    \foreach \x in {0,1,2,3,4}{
	\draw[thick] (\x*72-54:3) -- (\x*72+18:3);
	\draw[fill] (\x*72-54:3) circle (0.1);
	}
	
	\foreach \x in {0,1,2}{
	\draw[thick] (\x*72-54:3) -- (18:1);
	\draw[thick] (\x*72+90:3) -- (216-54:1);
	}
	\draw[thick] (90:3) -- (0,0);
	\draw[dotted] (90+144:3) -- (0,0);
	\draw[fill] (72-54:1) circle (0.1);
    \draw[fill] (216-54:1) circle (0.1);
    
   \draw[fill] (0,0) circle (0.1);
    \coordinate [label=center: \textcolor{red}{4}] (A) at (144-54:3.3);
	 \coordinate [label=center: 2] (A) at (-54:3.3);
     \coordinate [label=center: 3] (A) at (18:3.3);
     \coordinate [label=center: 3] (A) at (144+18:3.3);
    \coordinate [label=center: \textcolor{red}{1}] (A) at (18:0.7);

\coordinate [label=center: 2] (A) at (0.3,0);
\coordinate [label=center: \textcolor{red}{4}] (A) at (-72-54:3.3);
\coordinate [label=center: 1] (A) at (-144-54:0.7);
	}
	\end{tikzpicture}}
\end{center}
\end{minipage}
\caption{Examples of critical sets for extensions of the Moser-spindle}\label{fig:extensionsMoserspindle}
\end{figure}
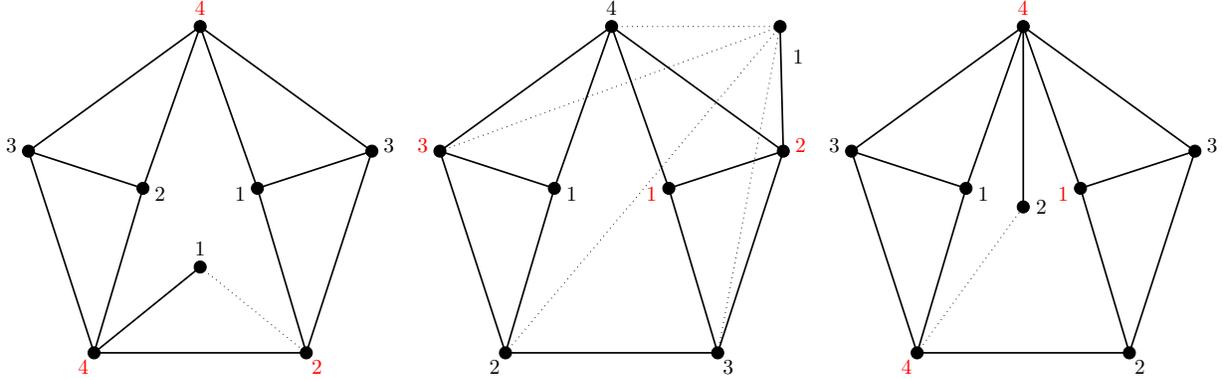

\section{Monotone non-decreasing parameters in terms of number of colours}\label{sec:monotonedecreasing_colours}

In this section, we address~\cite[Prob.~3]{CK14} (third problem in their conclusion section).

In the case of $\sn(G,k),$ we will start exploring the tree and bipartite graph case.
While $\sn(G)=\sn(T)=1$ for bipartite graphs $G$ and thus trees $T$, we note that the case $k>2$ is harder.
For trees, but not for bipartite graphs,
an upper bound in terms of the $r$-domination-number $\gamma_r(G)$ is valid.
The $r$-domination-number of a graph $G$ is the order of a smallest subset $D \subset V$ such that every vertex not in $D$ has at least $r$ neighbours in $D$. This notion was defined in~\cite{FJ85} and is non-decreasing in $r$, i.e. $\gamma_r(G) \le \gamma_{r+1}(G).$

\begin{prop}
    Let $T$ be a tree and $r \ge 1$ an integer. Then $\sn(T, 1+r) \le \gamma_r(T).$
    When $G$ is a bipartite graph and $r \ge 2$, $\sn(G, 1+r)$ cannot be bounded in terms of $\gamma_r(G).$
\end{prop}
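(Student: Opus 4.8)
The plan is to treat the two assertions separately. For the tree bound I would fix a minimum $r$-dominating set $D$ and build a single proper $(1+r)$-colouring $c$ in which precolouring exactly $D$ already forces $c$; since $\abs{D}=\gamma_r(T)$ this gives $\sn(T,1+r)\le\gamma_r(T)$. The guiding observation is that a vertex $v\notin D$ is forced as soon as the colours appearing on $N(v)\cap D$ exhaust $[1+r]\setminus\{c(v)\}$, i.e.\ realise all $r$ colours other than $c(v)$, and this is feasible precisely because $v$ has at least $r$ neighbours in $D$.

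Concretely, I would root $T$ at a vertex of $D$ (one exists, since an $r$-dominating set with $r\ge 1$ is nonempty on a nonempty graph) and colour $T$ in a top-down, pre-order sweep. When a vertex $v\notin D$ is reached its parent is already coloured, so I set $c(v)$ to be any colour distinct from the parent's, and then earmark ``target'' colours for the $D$-children of $v$ so that these children, together with the parent when the parent lies in $D$, realise all of $[1+r]\setminus\{c(v)\}$. A vertex of $D$ simply takes the colour earmarked for it, or (if its parent is also in $D$) any colour different from the parent. Because $T$ is a tree, the neighbourhood of each vertex consists only of its parent and children with no other adjacencies, so the single properness constraint at each step is with the already coloured parent and the earmarked targets never clash. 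The only thing to check is the count: a non-$D$ vertex has at least $r$ neighbours in $D$, at most one of which is its parent, leaving at least $r-1$ (resp.\ $r$) $D$-children when the parent is (resp.\ is not) in $D$, exactly enough to cover the required colours. With $c$ built this way, every $v\notin D$ sees all colours except $c(v)$ on $N(v)\cap D$ and is therefore forced the moment $D$ is precoloured, so $c|_D$ extends uniquely.

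For the second statement I would take $G=K_{m,m}$ with parts $A,B$ of size $m$. Here $\gamma_r(K_{m,m})\le 2r$, since choosing $r$ vertices in each part $r$-dominates everything, so $\gamma_r$ stays bounded as $m\to\infty$. It remains to prove $\sn(K_{m,m},1+r)\ge m$. In any proper $(1+r)$-colouring the two parts use disjoint colour sets $S_A,S_B$ with $\abs{S_A}+\abs{S_B}\le 1+r$, so $\min(\abs{S_A},\abs{S_B})\le r-1$ for $r\ge 2$; say $\abs{S_A}\le r-1$. Then every $b\in B$ has at least $(1+r)-\abs{S_A}\ge 2$ admissible colours (all of $[1+r]\setminus S_A$ are proper for $b$, whose neighbourhood is exactly $A$), and recolouring a single such $b$ to another admissible colour yields a proper colouring unchanged elsewhere. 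Hence every critical set must contain all of $B$, so $\scs(K_{m,m},c)\ge m$ for every colouring $c$ and thus $\sn(K_{m,m},1+r)\ge m$. Letting $m\to\infty$ with $\gamma_r\le 2r$ fixed shows that $\sn(\cdot,1+r)$ admits no bound in terms of $\gamma_r$ for bipartite graphs.

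I expect the delicate point to be the tree construction: the real obstacle is arguing that the individually forced colours assemble into one globally proper colouring rather than an inconsistent per-vertex prescription. The top-down earmarking resolves this because acyclicity confines every constraint to the parent edge, and the coverage count above guarantees that enough $D$-children remain to complete each colour requirement. In the second part the balanced choice $K_{m,m}$ is essential: an unbalanced $K_{r,m}$ has a small critical set (colour the size-$r$ side with all $r$ colours and the large side is forced), so it is exactly the requirement that the ``free'' side be large which dictates using two large parts.
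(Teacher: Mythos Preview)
Your proof is correct. For the tree bound you take essentially the paper's approach: both arguments build a single $(1+r)$-colouring in which every vertex outside the $r$-dominating set $D$ sees all $r$ other colours on $N(v)\cap D$ and is therefore immediately forced. The paper compresses this into a one-line appeal to the cut-vertex property of trees (``permute colour classes in components of $T\setminus v$''), while your rooted top-down construction with earmarked colours for the $D$-children makes the same mechanism explicit and checks the count carefully.

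For the second assertion your construction is genuinely different and cleaner. The paper takes a bipartite graph with one side $U$ of size $r+2$ and the other side consisting of many vertices whose neighbourhoods run over the $r$-subsets of $U$; pigeonhole on $U$ then produces many unfixed vertices on the large side. You instead use $K_{m,m}$, where disjointness of the two colour sets forces one side to use at most $r-1$ colours, so every vertex on the opposite side retains at least two admissible colours and must lie in any determining set, giving $\sn(K_{m,m},1+r)\ge m$ directly. Your version also makes the unboundedness explicit by letting $m\to\infty$ while $\gamma_r\le 2r$ stays fixed; the paper's write-up, as stated, only exhibits a single graph with $\sn>\gamma_r$ and leaves the parametrisation of the multiplicity implicit.
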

\begin{proof}
    Let $D$ be a $r$-dominating set of $T$.
    Since every vertex $v$ is a cut vertex in a tree $T$, one can permute the colour classes in any component of $T \backslash v$ without causing any issues at vertices different from $v$. This implies that one can precolour $D$ such that for every vertex $v$ in $V \backslash D$ its neighbourhood $N(v)$ has been coloured with $r$ different colours and such that for any two neighbours $u,v \in V \backslash D$ the neighbourhoods $N(u)$ and $N(v)$ are missing a different colour.
    From this, we conclude. It is not hard to see that the inequality is not sharp in general. For example a complete binary tree of height $h \ge 2$, satisfies $\sn(T,3)=2^h$, while $\gamma_2(T)=2^h+2^{h-2}+\ldots=\floorfrac{2^{h+2}}{3}$.
    
    Now for every $r\ge2$, we construct a bipartite graph for which $\sn(G, 1+r)>\gamma_r(G).$
    Let one bipartition class $U$ have $r+2$ vertices and the other one, $W$, $3\binom{r+3}{3}$.
    Let the vertices in $W$ have the $\binom{r+2}{r}$ different possible $r$-sets in $\binom{U}{r}$ as their neighbourhoods, where each such neighbourhood appears $r+3$ times.
    Since $U$ is a $r$-dominating set of $G$, we have $\gamma_r(G)\le r+2$ (actually equality does hold).
    On the other hand, a $(r+1)$-colouring of $U$ would have $2$ vertices in the same colour by the pigeon hole principle. Hence at least $r+3$ vertices in $W$ would not be fixed and thus belong to any critical set, i.e. $\sn(G, 1+r)>r+2.$ 
\end{proof}

Next, we prove that there is a polynomial algorithm to determine the Sudoku number $\sn(T,k)$ for any tree $T$ and integer $k \ge 3.$

\begin{prop}\label{propälgT}
    There does exist a polynomial algorithm to compute $\sn(T,k)$ for any tree $T$ and $k \ge 3.$
\end{prop}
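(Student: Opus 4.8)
The plan is to root $T$ at an arbitrary vertex $\rho$ and compute $\sn(T,k)$ by a bottom-up dynamic program, after first reducing the (global) uniqueness requirement to a purely local propagation condition. The reduction I would establish is the following equivalence, valid because $T$ is a forest: a partial proper colouring, specified by a set $S$ together with colours $c|_S$, extends to a unique $k$-colouring if and only if naive constraint propagation terminates with every vertex resolved, where a vertex becomes \emph{resolved} once $k-1$ of its neighbours are resolved and carry $k-1$ distinct colours (so exactly one colour remains for it). One direction is immediate. For the converse I would argue contrapositively: if propagation stalls, let $U$ be the set of unresolved vertices; every $v\in U$ has at most $k-2$ distinct colours among its already-resolved neighbours, hence at least two colours remain available to it, and $T[U]$ is a forest. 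A forest in which every vertex has a list of size at least $2$ admits at least two distinct list-colourings (an easy leaf-peeling induction), and any such colouring, glued to $c$ on $V\setminus U$, is a second proper extension agreeing on $S$; so uniqueness fails.

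With this equivalence in hand, $\sn(T,k)$ is the minimum of $\lvert S\rvert$ over all pairs $(c,S)$ for which propagation completes, and the resolution order it produces runs upwards through children and downwards through parents. This motivates, for each vertex $v$ and colour $a\in[k]$, two families of DP values: $f_R(v,a)$, the minimum number of precoloured vertices in the subtree $T_v$ so that $c(v)=a$, every vertex of $T_v$ is resolved, and $v$ is resolved \emph{from below} (either $v\in S$, or the children resolved before $v$ already contribute all $k-1$ colours $\neq a$); and $f_N(v,a,b)$ for $b\neq a$, the analogous value when $v$ is resolved only after its parent supplies the single missing colour $b$ (the children resolved before $v$ contribute exactly $[k]\setminus\{a,b\}$, and none of them is coloured $b$). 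I would verify that the resolution digraph arising from any successful configuration is a subgraph of $T$ with at most one oriented dependency per edge, hence acyclic, so a legal propagation order always exists; and conversely that any unique extension can, after reordering, be matched to such $R/N$ labels. The final answer is $\min_{a}f_R(\rho,a)$, since the root has no parent.

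The combination step at a vertex $v$ of colour $a$ with children $u_1,\dots,u_d$ is where the real work lies. A child resolved before $v$ must be in the $R$-state and then contributes its own colour to the pool seen by $v$; a child resolved after $v$ must be in the $N$-state with missing colour $a$ (so it needs $v$'s colour) and contributes nothing to the pool. For $f_R(v,a)$ with $v\notin S$ I need the $R$-children to jointly cover the whole set $[k]\setminus\{a\}$, and for $f_N(v,a,b)$ to cover exactly $[k]\setminus\{a,b\}$ while avoiding colour $b$. Since each resolved-before child contributes exactly one colour, the obstacle of tracking which subset of colours is covered --- potentially $2^{k}$ states --- is avoided by phrasing the optimum as a minimum-cost assignment: give each child a baseline cost equal to its cheapest admissible role, then charge the nonnegative surcharge $f_R(u_i,c')-\text{baseline}_i$ for forcing child $u_i$ to realise a prescribed pool colour $c'$; covering the target colour set at minimum total surcharge is a minimum-cost bipartite matching between the (at most $k-1$) target colours and the children, solvable in polynomial time. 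The option $v\in S$ simply adds $1$ to the sum of baseline costs. Each vertex therefore requires $O(k^2)$ matchings on instances of size $O(dk)$, and the whole DP runs in time polynomial in $n$ and $k$. The two points I expect to need the most care are the forest list-colouring equivalence (to be sure local propagation really captures global uniqueness) and the matching reformulation of the child-combination (to keep the colour-coverage bookkeeping polynomial rather than exponential in $k$).
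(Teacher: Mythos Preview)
Your approach is correct: the propagation-equals-uniqueness lemma for forests goes through via the list-colouring argument you sketch, and the $R/N$ dynamic program with the bipartite-matching combination step does compute $\sn(T,k)$ in polynomial time. It is, however, a much heavier route than the paper's. The paper never tracks colours at all. It locates a penultimate vertex $x$ (the neighbour of an endpoint of a longest path), lets $L$ be the set of $\ell$ leaves pendant at $x$, and proves one of three exact identities depending only on how $\ell$ compares to $k$: $\sn(T,k)=\sn(T\setminus L,k)+\ell$ when $\ell\le k-3$, $\sn(T,k)=\sn(T\setminus L,k)+\ell-1$ when $\ell\ge k-1$, and $\sn(T,k)=\sn(T\setminus(L\cup\{x\}),k)+\ell$ when $\ell=k-2$. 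Iterating strips the tree in at most $n$ arithmetic steps, with no colour-indexed table and no matching subroutine. What the paper's argument buys is brevity and, combined with the easy bound $\sn(T\setminus v,k)\le\sn(T,k)\le\sn(T\setminus v,k)+1$ for a leaf $v$, the monotonicity of $\sn(T,k)$ in $k$ as an immediate corollary. What your argument buys is a conceptual explanation of \emph{why} the problem is tractable on trees (uniqueness is exactly local propagation) and a framework that would adapt to variants---weighted vertices, or prescribed colours at designated vertices---where a one-line leaf-stripping identity is unlikely to exist.
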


\begin{proof}
    If the tree is a star, $\sn(T,k)=n-1$ (if $k \le n$), or $n$ if $k >n.$
    Now assume that $\diam(T)>2.$
    Pick a diameter of $T$ and let $x$ be a neighbour of one of the endpoints of the diameter.
    Let $x$ be adjacent to $\deg(x)-1=\ell>0$ leaves and one non-leaf $z$. We denote the set of these leafs by $L$.
    If $\ell \le k-3$, then all leaves and $x$ itself have to be precoloured and so $\sn(T,k)=\sn(T \backslash L,k) + \ell.$ 
    If $\ell \ge k-1,$ then we can precolour $L$ in such a way that $x$ can only be assigned one possible colour.
    Since $x$ has to be precoloured in $T \backslash L$ (being a leaf), this time we have $\sn(T,k)=\sn(T \backslash L,k) + \ell-1.$ 
    Finally, if $\ell=k-2,$ we prove that $\sn(T,k)=\sn(T \backslash L\backslash x,k) + \ell.$ 
    First observe that $\sn(T,k)\le \sn(T \backslash L\backslash x,k) + \ell,$ since one can precolour $T \backslash L\backslash x$ in such a way that all vertices get a colour at the end, and precolour $L$ in such a way that together with the final colour of $z$ it determines the colour of $x$ uniquely.
    In the other direction, we have $ \sn(T \backslash L\backslash x,k)\le \sn(T,k)- \ell.$
    The smallest set of precoloured vertices in $T$ will contain all of $L$.
    If $x$ is not precoloured, $z$ has to be uniquely determined first.
    If $x$ is precoloured as well, we can colour $z$ instead and do not need to colour both $L$ and $x.$
    This implies that we can easily determine $\sn(T,k)$ once we know $\sn(T',k)$ for some subtree $T'$ and so in at most $n$ steps, we have determined $\sn(T,k)$.
\end{proof}

By a bit of case distinction, one can observe that for a leaf $x$ of tree, $\sn(T \backslash x,k) \le \sn(T,x) \le \sn(T \backslash x,k)+1$.
Hence, as a corollary of the algorithm in Proposition~\ref{propälgT}, we observe that for trees, the parameter $\sn$ is monotone. On the other hand, this is not true in general, even in the class of bipartite graphs.
\begin{cor}
    For trees $T$, the numbers $\sn(T,k)$ are monotone non-decreasing in $k.$
\end{cor}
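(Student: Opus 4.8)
The plan is to upgrade the statement to the inequality $\sn(T,k)\le\sn(T,k+1)$ for every tree $T$ and every $k\ge 3$, proved by induction on $n=\lvert V(T)\rvert$, and to dispose of the remaining step $k=2\to k=3$ separately. The latter is immediate: $\sn(T,2)=1$ for every tree, while $\sn(T,k)\ge 1$ whenever $k\ge 2$ (a single free vertex already has at least two admissible colours), so $\sn(T,2)\le\sn(T,3)$. The base case of the induction is $\diam(T)\le 2$, i.e. stars, where the closed form from Proposition~\ref{propälgT}, namely $\sn(T,k)=n-1$ for $k\le n$ and $\sn(T,k)=n$ for $k>n$, is manifestly non-decreasing in $k$.

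For the inductive step I would fix a tree with $\diam(T)>2$ and pick, exactly as in the proof of Proposition~\ref{propälgT}, a vertex $x$ adjacent to $\ell\ge 1$ leaves (the set $L$) and to a single non-leaf $z$. The key observation is that $\ell$ is a feature of $T$ alone and does not move with $k$. Setting $T_1=T\setminus L$ (in which $x$ has become a leaf attached to $z$) and $T_2=T_1\setminus x$, both trees are smaller than $T$, so by the induction hypothesis $\sn(T_1,\cdot)$ and $\sn(T_2,\cdot)$ are non-decreasing; moreover the leaf-deletion bound recorded just before the corollary gives $\sn(T_2,k)\le\sn(T_1,k)\le\sn(T_2,k)+1$. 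The recursion of Proposition~\ref{propälgT} now describes $\sn(T,k)$ in three regimes governed by the position of $k$ relative to $\ell$: it equals $\sn(T_1,k)+\ell-1$ when $k\le\ell+1$, equals $\sn(T_2,k)+\ell$ when $k=\ell+2$, and equals $\sn(T_1,k)+\ell$ when $k\ge\ell+3$.

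Monotonicity inside each of the three regimes is automatic, since there the value of $\sn(T,k)$ is a fixed additive constant plus $\sn(T_1,k)$ or $\sn(T_2,k)$, both non-decreasing by induction. All the content is therefore concentrated at the two regime boundaries. At $k=\ell+1\to k=\ell+2$ I need $\sn(T_1,\ell+1)-1\le\sn(T_2,\ell+2)$, which I get by chaining $\sn(T_1,\ell+1)\le\sn(T_2,\ell+1)+1$ (leaf bound, upper side) with $\sn(T_2,\ell+1)\le\sn(T_2,\ell+2)$ (induction). At $k=\ell+2\to k=\ell+3$ I need $\sn(T_2,\ell+2)\le\sn(T_1,\ell+3)$, which follows from $\sn(T_2,\ell+2)\le\sn(T_2,\ell+3)$ (induction) and $\sn(T_2,\ell+3)\le\sn(T_1,\ell+3)$ (leaf bound, lower side).

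The step I expect to be the main obstacle is precisely this treatment of the boundary $k=\ell+2$, where the subtree that controls $\sn(T,k)$ switches from $T_1$ to $T_2$; one must invoke the leaf-deletion inequality in the correct direction on each side so that the two different subtrees can be compared. A second point that needs care is the range of validity: because the recursion of Proposition~\ref{propälgT} is only stated for $k\ge 3$, I must check that every boundary transition used in the induction occurs at values $k,k+1\ge 3$. This holds, since the boundary $k=\ell+1$ is only crossed when $\ell\ge 2$, so there $k=\ell+1\ge 3$, and the boundary $k=\ell+2$ already satisfies $k\ge 3$ because $\ell\ge 1$; the solitary $k=2\to k=3$ transition is handled by the trivial argument above rather than by the recursion.
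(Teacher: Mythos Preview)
Your proposal is correct and follows essentially the same route the paper sketches: it uses the recursion of Proposition~\ref{propälgT} together with the leaf-deletion inequality $\sn(T\setminus x,k)\le\sn(T,k)\le\sn(T\setminus x,k)+1$ to carry the monotonicity through an induction on $\lvert V(T)\rvert$. The paper merely asserts that the corollary follows from these two ingredients without writing out the boundary analysis, so your argument is a faithful (and more explicit) realisation of the intended proof.
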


\begin{thr}
    There are (bipartite) graphs $G$ and integers $k$ for which $\sn(G,k)< \sn(G,k-1),$ i.e. $\sn(G,K)$ is not monotone non-decreasing in $k.$
\end{thr}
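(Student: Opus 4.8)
The plan is to exhibit an explicit connected bipartite graph $G$ together with an integer $k$ for which $\sn(G,k)\le k$ while $\sn(G,k-1)$ is forced to be much larger; this yields the strict decrease $\sn(G,k)<\sn(G,k-1)$ asserted by the theorem. Fix $k\ge 5$ and a large multiplicity $M$ (for concreteness $M=k+1$). Let $U=\{u_1,\dots,u_k\}$ be one side, and for each of the $k$ subsets $S_i=U\setminus\{u_i\}$ introduce $M$ private vertices, each adjacent to exactly the vertices of $S_i$; let $W$ be the set of all these $kM$ vertices. Then $G=(U\cup W,E)$ is a connected bipartite graph (a ``blown-up crown'', the base $M=1$ case being $K_{k,k}$ minus a perfect matching), every $w\in W$ has degree $k-1$, and crucially the neighbours of a $W$-vertex all lie in $U$. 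I will prove $\sn(G,k)\le k$ and $\sn(G,k-1)\ge (k-4)M$, which for $k\ge 5$ and $M=k+1$ gives $(k-4)M\ge k+1>k$.

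The upper bound $\sn(G,k)\le k$ is immediate. Colour $U$ rainbow by $c(u_i)=i$. Then each $w$ of type $S_i$ sees the $k-1$ distinct colours $[k]\setminus\{i\}$ on its neighbourhood, so it is forced to the colour $i$. Hence $U$ is a determining set of size $k$ for this colouring, and $\sn(G,k)\le k$.

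The heart of the argument, and the step I expect to be the main obstacle, is the lower bound $\sn(G,k-1)\ge (k-4)M$, which must hold for \emph{every} proper $(k-1)$-colouring $c$ (recall $\sn=\underline{\scs}=\min_c\scs(G,c)$). The key structural fact is that, since every $(k-1)$-subset of $U$ occurs as some $N(w)$, each such subset must be non-rainbow, as otherwise the corresponding $w$ would see all $k-1$ colours and be uncolourable; consequently $c$ uses at most $k-2$ distinct colours on $U$. Call $w$ of type $S_i$ \emph{bad} if $N(w)=U\setminus\{u_i\}$ realises at most $k-3$ colours, for then $w$ has at least two admissible colours given $c|_U$. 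Since $N(w)\subseteq U$, a bad vertex can always be recoloured to its alternative colour without affecting any other vertex, so it can never be forced and therefore lies in \emph{every} determining set, giving $\scs(G,c)\ge\#\{\text{bad }w\}$. A short counting step then shows the number of bad subset-types is minimised when $U$ carries exactly $k-2$ colours as two doubletons and $k-4$ singletons: each singleton colour class yields one bad type (deleting its unique vertex drops a colour, leaving only $k-3$), while deleting a doubleton vertex keeps all $k-2$ colours and yields a \emph{good} (uniquely forced) type. Thus there are at least $k-4$ bad types, hence at least $(k-4)M$ bad $W$-vertices, for every such $c$.

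The delicate point is making this count uniform over all colourings rather than just the ``two-doubleton'' one: one must verify that using \emph{fewer} than $k-2$ colours on $U$ only increases the count (indeed with $t\le k-3$ colours every $(k-1)$-subset covers at most $k-3$ colours, so all $k$ types become bad), so the minimum over valid colourings is genuinely attained at $t=k-2$ and equals $k-4$. Together with the observation that ``bad'' exactly captures ``must be precoloured'' — which is precisely where the bipartite structure (the neighbourhoods of $W$ being confined to $U$) is essential — this gives $\sn(G,k-1)\ge (k-4)M>k\ge\sn(G,k)$, completing the proof.
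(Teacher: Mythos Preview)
Your proof is correct. Both you and the paper work with the same underlying bipartite gadget---the crown $K_{k,k}$ minus a perfect matching---but you take a cleaner route to the lower bound. The paper keeps multiplicity $M=1$ and computes the exact value $\sn(G_k,k-1)=2k-5$ by an induction that splits into cases according to whether some colour appears on both sides of the bipartition; pairing this with $\sn(G_k,k)=k$ yields the strict drop only from $k\ge 6$. Your blow-up replaces that induction by a direct pigeonhole count: any proper $(k-1)$-colouring must use at most $k-2$ colours on $U$, so at least $k-4$ of the $k$ neighbourhood types are ``bad'', and because the $W$-vertices have all their neighbours inside $U$ each bad vertex can be freely recoloured and therefore lies in every determining set. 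Taking $M$ large then makes the lower bound dominate without any case analysis, and the construction already works from $k\ge 5$. The paper's approach buys exact Sudoku numbers for the base crown, which is of independent interest; yours trades that precision for a shorter and more transparent proof of the non-monotonicity statement itself.
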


\begin{proof}
    Let $G_k=K_{k,k}\backslash M$ with $M$ a perfect matching of $K_{k,k}$. 
    We compute two Sudoku numbers of this graphs in the following claims.
    
    \begin{claim}\label{clm:sn(G,k-1)}
        For $k\ge 3$ the Sudoku number $\sn(G_k,k-1)$ equals $2k-5.$
    \end{claim}
    \begin{claimproof}
        We will prove this by induction.
        For $k=3,$ since $G_3$ is bipartite, we have $\sn(G_3,2)==\sn(G_3)=1= 2 \cdot 3-5.$ So the base case is true.
        
        Now let $k \ge 4.$ 
        Let $c$ be a proper $(k-1)$-colouring of $G_k$.
        Observe that for both sides of the partition, call them left and right, there is a colour appearing at least twice (and hence not at the other side).
        Let $1$ be such a colour at the left and $2$ be a colour appearing multiple times at the right.
        
        First, assume there is no colour appearing on both sides.
        If both sides contain at least $2$ colours, we would be unable to extend a partial colouring uniquely, i.e. $V$ is the only critical set.
        So assume instead that (without loss of generality) the left is completely coloured with $1.$
        In this case, the right side completely belongs to any critical set.
        The right side needs to contain all colours, to ensure that a critical set different from $V$ does exist.
        There are $k-2$ colours appearing on $k$ vertices, hence at least $k-4$ vertices on the right have a colour that appears only once.
        For each of these vertices, the neighbour in $M$ is an element of any critical set.
        So the critical set contains at least $2k-4$ vertices.
        
        Next, assume there is a colour $i$ which appears on both sides of the partition in which case it appears precisely once, at the end-vertices of an edge $e$ of $M$. 
        Then $V(G_k)\backslash V(e)$ is coloured with $k-2$ colours and the constraints are the same as for $G_{k-1}.$
        A critical set clearly needs to contain the vertices coloured with $i$, since otherwise these could be coloured with $1$ or $2.$
        The remaining of the critical set, has to be a critical set of the $(k-2)$-colouring of $G_{k-1}.$
        As such, this critical set contains at least $2+2(k-1)-5=2k-5$ vertices by induction.
        Furthermore equality is possible for the colouring $c$ which assigns the colours $\{1,2\}$ to the end-vertices of $3$ edges of $M$ and assigns a unique colour for every other edge of $M$.
    \end{claimproof}
    
    \begin{claim}
        For $k\ge 3$, $\sn(G_k,k)=k.$
    \end{claim}
    \begin{claimproof}
        First, let $c$ be the colouring that assigns a unique colour for every edge $e$ in $M$ to the two end vertices of $e.$
        Now a partition class is a critical set for this colouring, these contain $k$ colours.
        Analogously to the proof of Claim~\ref{clm:sn(G,k-1)}, i.e. with some case analysis, one can prove that $\sn(G_k,k)\ge k$ and thus $\sn(G_k,k)= k.$
    \end{claimproof}
    By combining these two claims, we conclude that $\sn(G,k)=k<2k-5= \sn(G,k-1)$ if $k \ge 6.$ 
\end{proof}

Also $\underline{\lcs}(G,k)$ is not a non-decreasing parameter.

\begin{thr}
    There are graphs $G$ and integers $k$ for which $\underline{\lcs}(G,k)< \underline{\lcs}(G,k-1),$ i.e. the parameter $\underline{\lcs}$ is not monotone non-decreasing in the number of colours $k.$
\end{thr}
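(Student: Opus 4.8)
The plan is to produce an explicit graph $G$ together with an integer $k$ and to compute, or tightly bound, the two quantities $\underline{\lcs}(G,k)$ and $\underline{\lcs}(G,k-1)$ separately, exactly as the preceding theorem was settled by two claims for $\sn$. Before choosing $G$ I would pin down the regime in which a strict decrease is even possible. By Theorem~\ref{thr:snGk=n} the value $\underline{\lcs}(G,\cdot)$ equals $n$ as soon as the number of colours exceeds $\Delta+1$, so to keep both values below $n$ and leave room for a gap I need $\chi(G)\le k-1$ and $k\le\Delta+1$; moreover $\delta(G)\ge k-1$ is desirable, since a vertex whose degree is smaller than (number of colours)$-1$ can never be fixed and is therefore forced into \emph{every} critical set. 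This last remark is exactly why the decrease is delicate: raising the number of colours can only enlarge the set of unfixed, hence essential, vertices, so the strict drop cannot come from the count of essential vertices and must instead be engineered through the symmetry-breaking part of a minimal determining set.

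Concretely, I would look for a graph that, at $k-1$ colours, forces \emph{every} colouring to carry a pervasive family of colour-swaps, so that a minimal determining set must spend many vertices pinning them down and $\lcs(G,c)$ is large for all $(k-1)$-colourings $c$; while at $k$ colours there is one special colouring $c^{\ast}$ whose recolouring structure is rigid and can be broken by very few vertices. The natural first candidate is a modification (or blow-up) of the gadget $G_k=K_{k,k}\setminus M$ used just above, because there the interplay between matched pairs and the available palette is already well understood; the point is to arrange the construction so that the roles of ``flexible'' and ``rigid'' palette are the reverse of what drives the $\sn$ example. To show $\underline{\lcs}(G,k-1)$ is large I would argue directly that for every $(k-1)$-colouring one can greedily build a minimal determining set omitting only a bounded number of vertices, using Observation~\ref{obs:1} to control which independent sets of equally-coloured fixed vertices may be discarded. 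To show $\underline{\lcs}(G,k)$ is small I would exhibit $c^{\ast}$ and prove that \emph{any} determining set for $(G,c^{\ast})$ reduces to a small one, i.e.\ that no minimal determining set is large.

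The main obstacle is precisely this upper bound. Lower-bounding $\underline{\lcs}$ only requires producing one large critical set per colouring, whereas upper-bounding $\lcs(G,c^{\ast})$ requires controlling \emph{all} minimal determining sets of a single colouring and proving each is small; this is the genuinely hard ``maximise over critical sets'' direction, and it must be pushed through against the general tendency, noted above, for extra colours to enlarge critical sets. I expect the decisive lemma to be a swap / Kempe-chain argument showing that in $c^{\ast}$ every non-trivial recolouring is supported on a structure already pinned by a small vertex set, so that every large determining set contains a removable vertex and hence fails to be critical. Once both bounds are in hand, taking $k$ past a small threshold, as with the condition $k\ge 6$ in the $\sn$ case, yields $\underline{\lcs}(G,k)<\underline{\lcs}(G,k-1)$.
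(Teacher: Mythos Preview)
Your proposal is a strategy, not a proof, and it rests on a claim that is actually false and points you away from the mechanism the paper exploits. You write that ``raising the number of colours can only enlarge the set of unfixed, hence essential, vertices, so the strict drop cannot come from the count of essential vertices.'' This conflates two things. It is true that the set of \emph{degree-deficient} vertices (those with degree $<k-1$, hence unfixable for any $k$-colouring) is monotone in $k$. But whether a vertex of adequate degree is fixed depends on the particular colouring, and with more colours one may have \emph{more} freedom to design a colouring in which many vertices are fixed. A vertex can be forced into every critical set at $k-1$ colours not because its degree is too small, but because structural constraints on $(k-1)$-colourings prevent its neighbourhood from ever displaying $k-2$ distinct colours; those constraints can disappear at $k$ colours. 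So the drop can, and in the paper does, come precisely from the count of essential vertices.

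Concretely, the paper takes $G$ to be $K_{3,t}$ with two $K_4^-$ gadgets glued onto the small side $\{u_1,u_2,u_3\}$, and compares $k-1=3$ with $k=4$. The triangles in the gadgets force $c(u_1)=c(u_2)=c(u_3)$ in \emph{every} $3$-colouring, so each $v_i$ (which has $N(v_i)=\{u_1,u_2,u_3\}$) sees only one colour and is never fixed; hence all $t$ vertices $v_i$ lie in every critical set and $\underline{\lcs}(G,3)\ge t+2$. At $k=4$ one can colour the $u_i$ with three distinct colours and every $v_i$ with the fourth, making all $v_i$ fixed; a short case check then gives $\lcs(G,c)=5$ for this colouring, so $\underline{\lcs}(G,4)\le 5$. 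The decisive idea is the gadget that rigidifies the neighbourhood colouring at $k-1$ but not at $k$; this is exactly what your heuristic ruled out, and it is not visible in the $K_{k,k}\setminus M$ family you propose to modify, where no analogous ``equal-colour'' constraint is available to force a large essential set uniformly over all $(k-1)$-colourings.
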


\begin{proof}

        Let $G$ be the graph formed by adding two $K_4^-$s ($K_4$ minus an edge) to a $K_{3,t}$ as presented in Figure~\ref{fig:ulcs(k-1)>ulcs(k)}.
    We will prove that if $t \ge 4$, then $\underline{\lcs}(G,4)<\underline{\lcs}(G,3).$
    Let $c$ be an arbitrary $3$-colouring of $G$.
    Since $u_1wx$ and $u_2wx$ are triangles, and so are $u_2yz$ and $u_3yz$, we note that $c(u_1)=c(u_2)=c(u_3)$.
    This implies that $c(v_i),$ for any $1 \le i \le t$ can never be a fixed vertex and as such the $v_i$ belong to every determining set.
    Since the colours could be swapped among $\{w,x\}$ and $\{y,z\},$ at least one of the two has to belong to a determining set.
    Hence $\underline{\lcs}(G,3) \ge t+2.$
    Actually equality does hold, since a larger determining set will not be minimal.
    
    Next, we let $c$ be the $4$-colouring with
    $c(v_i)=4$ for every $1 \le i \le t$, $c(u_i)=i$ for every $1 \le i \le 3$,
    and $c(w)=3,c(x)=4=c(y)$ and $c(z)=1.$
    One can check that $\lcs(G,c)=5$ and thus $\underline{\lcs}(G,4)\le 5.$
    To do this, note that at most one of the $v_i$s does belong to a critical set and so the case analysis is limited.
    Also $u_1$ and $u_3$ have to be in a determining set.
    Examples of critical sets are $\{u_1,u_2,u_3,x,y\}$ and $\{u_1,u_3,w,x,v_1\}.$
\end{proof}
\begin{figure}[h]
\begin{center}
    \begin{tikzpicture}
    {
    \foreach \x in {0,1,2,3,4,5,6}{
	\foreach \y in {0,3,6}{
	\draw[thick] (\y,0) -- (\x,2);}		
	;}

    \foreach \z in {1,2}{
	\foreach \y in {0,3}{
	\draw[thick] (\y,0) -- (\z,-1);}		
	;}
	
	\foreach \z in {4,5}{
	\foreach \y in {6,3}{
	\draw[thick] (\y,0) -- (\z,-1);}		
	;}
	
	\draw[thick] (2,-1) -- (1,-1);
	\draw[thick] (5,-1) -- (4,-1);

    \foreach \x in {0,1,2,3,4,5,6}{\draw[fill] (\x,2) circle (0.1);}
	\foreach \x in {0,3,6}{\draw[fill] (\x,0) circle (0.1);}
	\foreach \x in {1,2,4,5}{\draw[fill] (\x,-1) circle (0.1);}

    \coordinate [label=center: $v_1$] (A) at (0,2.35);
    \coordinate [label=center: $v_2$] (A) at (1,2.35);
    
    \coordinate [label=center: $\dots$] (A) at (3.5,2.35);
    \coordinate [label=center: $v_t$] (A) at (6,2.35);
    
	\coordinate [label=center: $u_1$] (A) at (0,-0.35);
    \coordinate [label=center: $u_2$] (A) at (3,-0.35);
    \coordinate [label=center: $u_3$] (A) at (6,-0.35);
    
    \coordinate [label=center: $w$] (A) at (1,-1.3);
    \coordinate [label=center: $x$] (A) at (2,-1.3);
    
    \coordinate [label=center: $y$] (A) at (4,-1.3);
    \coordinate [label=center: $z$] (A) at (5,-1.3);

	}
	\end{tikzpicture}
\end{center}
\caption{A graph $G$ for which $\underline{\lcs}(G,4)<\underline{\lcs}(G,3)$}\label{fig:ulcs(k-1)>ulcs(k)}
\end{figure}
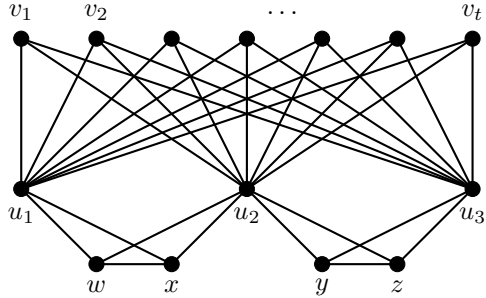
Finally, we observe that by Theorem~\ref{thr:overline=n}, the other two parameters 
%$\overline{\scs}(G,k)$ and $\overline{\lcs}(G,k)$ 
are monotone non-decreasing in $k$.
\begin{cor}
    The parameters $\overline{\scs}(G,k)$ and $\overline{\lcs}(G,k)$ are monotone non-decreasing in $k$.
\end{cor}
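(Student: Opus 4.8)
The plan is to read off everything directly from Theorem~\ref{thr:overline=n}, which completely pins down the values of both $\overline{\scs}(G,k)$ and $\overline{\lcs}(G,k)$ at the relevant thresholds. Both parameters are only defined once a proper $k$-colouring exists, i.e.\ for $k\ge\chi(G)$, so I would restrict attention to this range and show that the two sequences $\bigl(\overline{\scs}(G,k)\bigr)_{k\ge\chi}$ and $\bigl(\overline{\lcs}(G,k)\bigr)_{k\ge\chi}$ are non-decreasing.

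First I would split the range of $k$ into two regimes. At $k=\chi(G)$, the ``only if'' direction of Theorem~\ref{thr:overline=n}, together with the bound $\overline{\scs}(G,\chi)\le\overline{\lcs}(G,\chi)\le n-1$ recorded in its proof, gives that both parameters are at most $n-1$. For every $k>\chi(G)$, the theorem gives the exact values $\overline{\scs}(G,k)=\overline{\lcs}(G,k)=n$. Hence the only transition that needs checking is the single step from $k=\chi$ to $k=\chi+1$, where the common value rises from some quantity at most $n-1$ up to exactly $n$; this is an increase. For every pair of consecutive colour counts $k,k+1$ with $k>\chi$, both values equal $n$, so the sequence is constant and therefore non-decreasing there as well. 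Stitching the two regimes together yields that both $\overline{\scs}(G,k)$ and $\overline{\lcs}(G,k)$ are monotone non-decreasing in $k$.

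The statement carries essentially no obstacle, precisely because Theorem~\ref{thr:overline=n} has already done all the work: every value is forced to be either ``$\le n-1$'' (at $k=\chi$) or ``$=n$'' (for $k>\chi$). The only point one must not overlook is to verify that the $k=\chi$ value is \emph{strictly} below $n$, so that the jump to $k=\chi+1$ is a genuine increase and not a decrease. This strictness is exactly the content of the $k=\chi$ case in the theorem's proof, where for each colour $i\le\chi$ one exhibits a vertex of colour $i$ that cannot be recoloured (else $\chi(G)\le\chi-1$), which forces at least one vertex to lie outside some critical set and hence caps the value at $n-1$.
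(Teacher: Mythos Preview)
Your proposal is correct and follows exactly the approach the paper intends: the corollary is recorded without further argument beyond citing Theorem~\ref{thr:overline=n}, and your write-up simply spells out the two-regime reasoning (value $\le n-1$ at $k=\chi$, value $=n$ for all $k>\chi$) that this citation implicitly invokes.
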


\section{Monotone non-decreasing parameters for the subgraph-relation}\label{sec:monotonedecreasing_subgraph}

Since for every parameter $\rho \in \{ \sn, \underline{\lcs},\overline{\scs}, \overline{\lcs}\},$ we have $\rho(K_n)=n-1$ and $\rho(H)=1$ for a biparite graph, it might be tempting to wonder if the the parameters may be non-decreasing for the subgraph-relation.
Nevertheless, that is not the case, as shown by the followoing proposition.

\begin{prop}
For every parameter $\rho \in \{ \sn, \underline{\lcs},\overline{\scs}, \overline{\lcs}\},$
and any value $N>0$, there are examples of graphs $H \subsetneqq G$ with $\chi(H)=\chi(G)$ satisfying $\rho(H)>N\rho(G).$
\end{prop}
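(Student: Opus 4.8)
The plan is to exhibit a single family of pairs $(H,G)$ that settles all four parameters at once, exploiting the fact that the four numbers are squeezed between $\sn$ and $\overline{\lcs}$. The underlying principle is a ``rigidification'' trick: I start with a graph $H$ whose parameters are large and add edges to obtain a uniquely colourable supergraph $G$, for which the parameters collapse to $\chi-1$. Concretely, given any graph $H$ with $\chi(H)=k$, fix a proper $k$-colouring of $H$ with colour classes $V_1,\dots,V_k$ and let $G$ be the complete $k$-partite graph on the same vertex set with parts $V_1,\dots,V_k$, i.e.\ add every edge joining two distinct classes. Then $H\subseteq G$, and since the chosen colouring remains proper for $G$ we get $\chi(G)=k=\chi(H)$.

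The first structural fact to check is that $\rho(G)=k-1$ for all four parameters. A complete $k$-partite graph with exactly $k$ parts is uniquely $k$-colourable: in any proper $k$-colouring a transversal of the parts is a clique $K_k$ and must receive all $k$ colours, and a short pigeonhole argument shows that no part can be bichromatic, so each part is monochromatic and the proper $k$-colourings are precisely the $k!$ assignments of colours to parts. Consequently a set of coloured vertices is determining exactly when it meets at least $k-1$ of the parts, every minimal determining set uses exactly one vertex from each of $k-1$ parts, so all critical sets have size $k-1$; hence $\sn(G)=\overline{\scs}(G)=\underline{\lcs}(G)=\overline{\lcs}(G)=k-1$.

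Next I would instantiate $H=C_{2n+1}$, which has $\chi=3$ and, by the values quoted in the introduction, $\sn(C_{2n+1})=n+1$. The point is that this single known quantity forces large lower bounds on all four parameters: since $\scs(\cdot,c)\le\lcs(\cdot,c)$ for every colouring $c$, we have $\sn\le\overline{\scs}\le\overline{\lcs}$ and $\sn\le\underline{\lcs}\le\overline{\lcs}$, so each of the four parameters of $C_{2n+1}$ is at least $\sn(C_{2n+1})=n+1$. Taking the $3$-colouring of $C_{2n+1}$ with classes of sizes $n,n,1$ yields $G=K_{n,n,1}$, a proper spanning supergraph of $C_{2n+1}$ with $\chi(G)=3$ and $\rho(G)=2$ for every $\rho$. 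Choosing $n\ge 2N$ then gives $\rho(H)\ge n+1>2N=N\,\rho(G)$ simultaneously for all four parameters.

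The routine verifications are the embedding $C_{2n+1}\subsetneqq K_{n,n,1}$ (every cycle edge joins two distinct classes, and $K_{n,n,1}$ has strictly more edges) and the unique-colourability computation above. The only genuinely conceptual step, and the one I would highlight, is the observation that the elementary min/max inequalities among $\sn=\underline{\scs}$, $\overline{\scs}$, $\underline{\lcs}$ and $\overline{\lcs}$ let one quoted value $\sn(C_{2n+1})=n+1$ control all four parameters from below; this is what allows a single family to handle every $\rho$ at once, rather than constructing a separate witness for each of the four cases.
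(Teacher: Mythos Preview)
Your proof is correct and takes essentially the same approach as the paper: embed an odd cycle into a complete tripartite supergraph and use that $\sn$ lower-bounds all four parameters while unique colourability of the complete multipartite graph forces $\rho(G)=2$. The only cosmetic difference is that the paper picks the balanced embedding $C_{3p}\subset K_{p,p,p}$ whereas you take $C_{2n+1}\subset K_{n,n,1}$, and you spell out the general ``rigidification'' principle (pass to the complete $k$-partite graph on the colour classes) before specialising.
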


\begin{proof}
    It is easy to show (\cite[Thr.~6]{CK14}) that for any odd $p$, $\rho(C_{3p}) \ge \sn(C_{3p})=\frac{3p+1}{2}$, while (e.g. by \cite[Prop.~1]{CK14}) $\rho(K_{p,p,p})=2$ for each of the parameters.
    Since $C_{3p}$ is a subgraph of $K_{p,p,p}$, we conclude by taking $p\ge \frac{4N}{3}$.
\end{proof}

Note that we have added the condition that $\chi(H)=\chi(G)$ in this proposition.
when $H \subset G$ and $\chi(H) < \chi(G)$ one has more possible colours to extend the partial colouring in $G$ and so it is less natural to compare them.
In particular, we have that the parameter $\overline{\scs}$ is actually a non-increasing parameter when considering the subgraph-relation for graphs with the same chromatic number.

\begin{thr}
    Let $H \subset G$ with $\chi(H)=\chi(G)$.
    Then $\overline{\scs}(H) \ge \overline{\scs}(G).$
\end{thr}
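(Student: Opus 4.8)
The plan is to exhibit, for the extremal colouring of $G$, a colouring of $H$ whose smallest critical set is at least as large, the key point being that deleting edges can only make a colouring harder to pin down. Write $V=V(H)=V(G)$ and $k=\chi(G)=\chi(H)$; since the preceding example has $C_{3p}$ and $K_{p,p,p}$ on the same vertex set, I read $H\subset G$ as a spanning subgraph, so that $E(H)\subseteq E(G)$. A preliminary reduction I would record is that for any pair $(F,c)$ the quantity $\scs(F,c)$ equals the minimum size of a \emph{determining} set for $(F,c)$: a smallest determining set cannot properly contain another determining set, so it is automatically minimal, hence critical. This lets me argue with determining sets, which behave monotonically under edge addition, rather than directly with the (non-monotone) notion of minimality.

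First I would choose a $k$-colouring $c$ of $G$ attaining $\scs(G,c)=\overline{\scs}(G)$ (the maximum is attained since there are finitely many colourings). Because $E(H)\subseteq E(G)$, the same map $c$ is a proper $k$-colouring of $H$, hence admissible in the definition of $\overline{\scs}(H)=\overline{\scs}(H,k)$, which already gives $\overline{\scs}(H)\ge\scs(H,c)$.

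The heart of the argument is the claim $\scs(H,c)\ge\scs(G,c)$, which I would derive from the inclusion ``every determining set for $(H,c)$ is a determining set for $(G,c)$''. Indeed, let $S$ determine $(H,c)$ and let $c'$ be any $k$-colouring of $G$ with $c'$ agreeing with $c$ on $S$. Since $E(H)\subseteq E(G)$, the map $c'$ is also proper on $H$ and still agrees with $c$ on $S$, so the determining property of $S$ in $H$ forces $c'=c$ on all of $V$; thus $S$ determines $(G,c)$. Consequently the family of determining sets of $(H,c)$ is contained in that of $(G,c)$, and taking minimum sizes (via the preliminary reduction) yields $\scs(G,c)\le\scs(H,c)$. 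Chaining the inequalities gives $\overline{\scs}(H)\ge\scs(H,c)\ge\scs(G,c)=\overline{\scs}(G)$.

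I do not anticipate a serious obstacle; the only points needing care are the reduction to determining sets (so that edge-monotonicity can be invoked) and the \emph{direction} of the inclusion—more edges mean fewer proper colourings, so it is the $H$-determining sets that embed into the $G$-determining sets, not the reverse. The spanning hypothesis is genuinely needed: a triangle with a pendant vertex attached has $\overline{\scs}=3$ whereas its triangle subgraph has $\overline{\scs}=2$, so the inequality reverses once $V(H)\subsetneq V(G)$.
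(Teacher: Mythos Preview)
Your argument is correct and follows essentially the same route as the paper: pick a $k$-colouring $c$ realizing $\overline{\scs}(G)$, observe that $c$ is proper on $H$, note that any determining set for $(H,c)$ is determining for $(G,c)$ because proper $k$-colourings of $G$ restrict to proper $k$-colourings of $H$, and chain inequalities. Your explicit reduction ``smallest critical set $=$ smallest determining set'' and your remark that the spanning hypothesis is genuinely needed (with the triangle-plus-pendant counterexample) are useful clarifications that the paper leaves implicit.
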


\begin{proof}
    Assume that $\overline{\scs}(G)=\scs(G,c)$ for the $k$-colouring $c$, where $k=\chi(G).$
    Let $S$ be a smallest critical set for $(H,c).$
    Here we used that $c$ is also a proper colouring of $H$.
    Since $S$ is a determining set for $(H,c)$ and $G$ is a supergraph of $H$, $S$ will also be a determining set for $(G,c).$
    As such, we conclude that 
    $\overline{\scs}(G)=\scs(G,c) \le \lvert S \rvert = \scs(H,c) \le  \overline{\scs}(H).$
\end{proof}

We conjecture that the same is true for $\overline{\lcs}(H)$ and one can ask more specifically if for every proper colouring of $G$ under the conditions of the conjecture, $\lcs(H,c) \ge \lcs(G,c).$

\begin{conj}\label{conj:subgraphmonotonicity_for_olcs}
    Let $H \subset G$ with $\chi(H)=\chi(G)$.
    Then $\overline{\lcs}(H) \ge \overline{\lcs}(G).$
\end{conj}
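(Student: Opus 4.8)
The plan is to establish the stronger statement suggested by the authors: for every proper $k$-colouring $c$ of $G$ (with $k=\chi(G)=\chi(H)$), we have $\lcs(H,c)\ge \lcs(G,c)$, which immediately yields $\overline{\lcs}(H)\ge\overline{\lcs}(G)$ by taking $c$ to be the colouring maximizing $\lcs(G,c)$ and then noting $\overline{\lcs}(H)\ge\lcs(H,c)\ge\lcs(G,c)=\overline{\lcs}(G)$. So fix a colouring $c$ achieving $\overline{\lcs}(G)=\lcs(G,c)$, and let $S_G$ be a largest critical set for $(G,c)$ with $\lvert S_G\rvert=\overline{\lcs}(G)$. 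The goal is to produce a critical set $S_H$ for $(H,c)$ with $\lvert S_H\rvert\ge\lvert S_G\rvert$.

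The natural first step is to recall, as in the previous theorem, that since $H\subset G$, every determining set for $(H,c)$ is also a determining set for $(G,c)$; contrapositively, a set that is \emph{not} determining for $G$ is not determining for $H$ either. This monotonicity goes the ``wrong way'' for largest critical sets, which is exactly why the statement is nontrivial: making the graph sparser can only enlarge the space of proper colourings, so a set can fail to be determining in $H$ while determining in $G$, but it can also fail to be \emph{minimal} in $H$ even when it was minimal (critical) in $G$. The key structural observation I would try to exploit is that $S_G$ is critical for $(G,c)$ means $S_G$ is determining for $G$ but $S_G\setminus\{v\}$ is not determining for $G$, for every $v\in S_G$. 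The plan is to start from $S_G$ and adjust it to a critical set of $H$ of at least the same size, by first extending $S_G$ to a determining set $D\supseteq S_G$ of $(H,c)$ (necessary because $S_G$ may no longer determine $c$ in $H$), and then paring $D$ back down to a critical (i.e.\ minimal determining) subset $S_H\subseteq D$ for $(H,c)$.

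The crux is to control the size when passing from $S_G$ to $S_H$: extending to $D$ only adds vertices, but the subsequent minimization could in principle remove many, possibly dropping below $\lvert S_G\rvert$. To rule this out I would argue that no vertex of $S_G$ is removed in the minimization step. Concretely, take any $v\in S_G$. Since $S_G$ is critical for $(G,c)$, the set $S_G\setminus\{v\}$ admits a proper $k$-colouring $c'\ne c$ of $G$ agreeing with $c$ on $S_G\setminus\{v\}$; because $H\subset G$, this $c'$ restricts to a proper colouring of $H$ agreeing with $c$ on $S_G\setminus\{v\}$, so $S_G\setminus\{v\}$ is \emph{not} determining for $(H,c)$ either. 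Hence in any determining set $D\supseteq S_G$ for $H$, the vertex $v$ cannot be discarded while still determining $c$ on $H$ using only $D\setminus\{v\}\supseteq S_G\setminus\{v\}$ --- that is, $v$ lies in some inclusion-minimal determining subset of $D$. The hard part is to upgrade this ``$v$ is individually necessary relative to $S_G\setminus\{v\}$'' into ``$v$ survives in a single simultaneously-chosen critical subset $S_H\subseteq D$'', since minimality of a determining set is a global condition and the obstructions at different vertices may interact. I expect this to be the main obstacle, and I would attack it by ordering the removal step so that vertices of $D\setminus S_G$ are deleted first (greedily, maintaining the determining property), arguing that once only $S_G$ remains one cannot remove any further vertex of $S_G$; the delicate point --- and where a careful argument or an additional idea is needed --- is verifying that greedily peeling off the ``extra'' vertices in $D\setminus S_G$ never forces $S_G$ itself to stop being determining, i.e.\ that $S_G$ together with no extra vertices is already determining for $H$, which is generally false and is precisely why the naive plan must be refined.

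If that refinement stalls, the fallback is a direct colouring-switch argument in the spirit of the proof of Theorem~\ref{thr:olcsG=n-1} and the $\overline{\scs}$ monotonicity theorem: analyze a vertex $v$ realizing $\overline{\lcs}(G)=n-1$-type extremal structure (or more generally the fixed-vertex structure of $c$), and show that any edge of $G$ absent in $H$ can only create new recolouring freedom that is ``absorbed'' by enlarging the critical set, never shrinking the maximum. Since the statement is left as a conjecture in the paper, I would flag that a complete proof likely requires understanding how deletion of a single edge $e=xy$ affects $\lcs(\cdot,c)$ and then inducting on $\lvert E(G)\setminus E(H)\rvert$, with the single-edge case being the genuine content.
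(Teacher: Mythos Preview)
The statement you are attempting to prove is stated in the paper as a \emph{conjecture}; the authors explicitly leave it open, offering only two pieces of evidence (no counterexamples on at most five vertices via~\cite[Fig.~1]{CK14}, and validity in the extremal case $\overline{\lcs}(G)=n-1$ by Theorem~\ref{thr:olcsG=n-1}). There is therefore no ``paper's own proof'' to compare your proposal against.

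Your write-up is not a proof either, and to your credit you say so. Most of your structural observations are correct: determining sets for $(H,c)$ are determining for $(G,c)$; hence a non-determining set for $(G,c)$ is non-determining for $(H,c)$; in particular $S_G\setminus\{v\}$ is not determining for $(H,c)$ for every $v\in S_G$. One clean consequence you do not isolate is worth recording: if the largest critical set $S_G$ of $(G,c)$ happens to remain determining for $(H,c)$, then $S_G$ is already \emph{critical} for $(H,c)$ (minimality transfers because each $S_G\setminus\{v\}$ is non-determining for $H$), and the conjecture follows immediately for that colouring. All the difficulty is concentrated in the case where $S_G$ fails to determine $c$ in $H$.

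In that hard case your extend-then-minimize plan has the gap you yourself flag, but let me name it precisely because one sentence of yours overshoots. From ``$S_G\setminus\{v\}$ is not determining for $(H,c)$'' you cannot conclude that ``$v$ cannot be discarded from $D$'': the witness colouring $c_v$ agrees with $c$ on $S_G\setminus\{v\}$, but it may well \emph{disagree} with $c$ on some vertex of $D\setminus S_G$, so $D\setminus\{v\}$ can still be determining for $(H,c)$. Thus there is no reason any particular $v\in S_G$ must lie in a minimal determining subset of $D$, let alone that all of $S_G$ lies in a single such subset. Your ``delete $D\setminus S_G$ first'' ordering does not help, since you correctly note that after peeling off all of $D\setminus S_G$ you may no longer be determining for $H$ at all. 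The single-edge-deletion induction you propose as a fallback faces the same obstruction at the inductive step. In short, your proposal accurately locates the difficulty but does not resolve it, which is consistent with the problem's status as open in the paper.
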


The table in~~\cite[Fig.~1]{CK14}\footnote{the second graph in the second column has to be $K_5 \backslash P_3$} shows that there are no counterexamples with $5$ vertices for the following conjecture (with up to $5$ vertices actually).
By Theorem~\ref{thr:olcsG=n-1}, it is also true when $\overline{\lcs}(G)=n-1,$ as in that case $\overline{\lcs}(H)=n-1$ as well.

On the other hand, we remark that there is no monotonicity in terms of the subgraph-order for the Sudoku number $\sn(G)$ and $\underline{\lcs}(G).$
In both cases, the difference is unbouded.

\begin{prop}
    For any value of $N>1$, there are examples of graphs $H \subsetneqq G$ with $\chi(H)=\chi(G)$ satisfying $N \sn(H)<\sn(G).$
    The same is true for $\underline{\lcs}$ instead of $\sn$.
\end{prop}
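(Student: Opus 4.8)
The plan is to construct, for any $N>1$, a pair $H \subsetneqq G$ with $\chi(H)=\chi(G)$ for which the Sudoku number drops drastically when passing to the subgraph. The guiding intuition is that adding edges to a graph can \emph{increase} the number of fixed vertices available for free, because each new edge imposes a colour constraint that a partial colouring can exploit to pin down more vertices; conversely a sparser graph $H$ may force almost all of its vertices into every critical set. So I would look for a dense graph $G$ whose rigidity makes $\sn(G)$ large, sitting on top of a sparse spanning subgraph $H$ of the same chromatic number whose loose structure makes $\sn(H)$ tiny.

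The first concrete candidate I would try is to reuse the machinery already in the excerpt. We know $\sn(K_n\square K_n)=\Theta(n^2)$, so the Latin-square graph is a natural source of a large Sudoku number. The strategy would be to take $G$ to be such a dense graph (or a blow-up / product with large $\sn$) and to exhibit a spanning subgraph $H$ with $\chi(H)=\chi(G)$ but $\sn(H)$ bounded by a constant. A clean way to force $\sn(H)$ small is to make $H$ \emph{uniquely colourable}: by the remark in the introduction, a uniquely colourable graph satisfies $\sn(H)=\chi(H)-1$, and a uniquely colourable graph can be very sparse while still having high chromatic number (e.g.\ built from a clique plus a carefully chosen sparse attachment). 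If I can embed a uniquely $k$-colourable sparse graph $H$ as a subgraph of a dense graph $G$ with $\sn(G)\ge N(k-1)$, then $N\sn(H)=N(k-1)\le \sn(G)$ and the proposition follows. The same graphs should work verbatim for $\underline{\lcs}$, since for a uniquely colourable graph every critical set has the same size $\chi-1$, giving $\underline{\lcs}(H)=\sn(H)=k-1$ as well.

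The cleanest self-contained construction, which I would present first, mirrors the earlier $\rho(C_{3p})$ versus $\rho(K_{p,p,p})$ trick but reversed in role: I want the \emph{subgraph} to be the cheap graph. I would take $H$ to be a sparse uniquely-colourable graph of chromatic number $k$ on many vertices (so $\sn(H)=\underline{\lcs}(H)=k-1$, a constant), and then add edges to obtain a dense $G$ of the same chromatic number $k$ but with $\sn(G)$ and $\underline{\lcs}(G)$ growing with the number of vertices. Concretely, letting $H$ be a uniquely $k$-colourable graph whose colour classes are large independent sets, one can add edges inside a product-like structure to turn large swaths of the graph into an object resembling $K_n\square K_n$ (restricted to stay $k$-colourable), pushing $\sn(G)=\Theta(n^2)$ while $\sn(H)=k-1$. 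Choosing $n$ large enough that $\Theta(n^2)>N(k-1)$ finishes both claims simultaneously.

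The main obstacle, and the step I would spend the most care on, is the simultaneous control of three quantities: keeping $\chi(H)=\chi(G)$ exactly equal while adding edges, keeping $\sn(H)$ (and $\underline{\lcs}(H)$) genuinely constant, and certifying the lower bound $\sn(G)\ge N(k-1)$ rigorously rather than just heuristically. The delicate point is that adding edges can in principle \emph{raise} the chromatic number, so the added edges must respect the unique colouring of $H$; and proving a large lower bound on $\sn(G)$ requires exhibiting, for \emph{every} $k$-colouring of $G$, a large set of non-fixed vertices that must lie in every critical set. I expect the lower bound on $\sn(G)$ to be the real work: I would handle it by choosing $G$ so that in any $k$-colouring a linear-in-$n$ number of vertices can be freely swapped (as in the $K_{k,k}\setminus M$ and blown-up $C_5$ arguments appearing earlier in the paper), forcing those vertices into every determining set. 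Once such a swapping structure is in place, counting the forced vertices gives the required $\Omega(N)$ gap for both $\sn$ and $\underline{\lcs}$.
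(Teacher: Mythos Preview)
Your central construction cannot work. You propose to take $H$ uniquely $k$-colourable and then add edges (respecting that colouring) to obtain a supergraph $G$ with $\chi(G)=k$ and $\sn(G)$ large. But unique colourability is \emph{upward}-closed under the subgraph relation at fixed chromatic number: if $H\subset G$ and $\chi(H)=\chi(G)=k$, then every proper $k$-colouring of $G$ restricts to a proper $k$-colouring of $H$; since $H$ has only one such colouring up to permutation, so does $G$. Hence $G$ is itself uniquely $k$-colourable and $\sn(G)=\underline{\lcs}(G)=k-1=\sn(H)$. No choice of added edges can push $\sn(G)$ above $k-1$ while keeping $\chi(G)=k$. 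The same obstruction kills the $K_n\square K_n$ variant you sketch: you cannot have a uniquely $k$-colourable spanning subgraph of a graph with many essentially different $k$-colourings.

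The paper's argument runs in the opposite direction and exploits exactly the phenomenon your proposal misses: it is the \emph{removal} of an edge that collapses the Sudoku number. Take $G$ to be a $K_{p,p}$ sharing one vertex $v$ with a triangle; here $\chi(G)=3$, and in any $3$-colouring one bipartition class of the $K_{p,p}$ is free to mix two colours vertex by vertex, forcing roughly $p$ vertices into every determining set, so $\sn(G)=\underline{\lcs}(G)=p+1$. Now delete a single $K_{p,p}$-edge not incident to $v$ to get $H$. The missing edge lets its two endpoints share a colour, and precolouring just those two endpoints together with one triangle vertex triggers a cascade that pins down the entire graph; one checks $\sn(H)=3$ and $\underline{\lcs}(H)\le 4$. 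Letting $p\to\infty$ gives the arbitrarily large ratio. The moral is that to get $\sn(H)$ small you do not want $H$ rigid (uniquely colourable); you want $H$ to admit one carefully engineered colouring with a long forcing chain, something a single deleted edge can create.
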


\begin{proof}
    Let $G$ be a $K_{p,p}$ for which one of its vertices is connected to the two end-vertices of a $K_2$ (i.e. a $K_{p,p}$ and a $K_3$ that share exactly one vertex $v$).
    Let $H$ be equal to $G$ minus one edge of the $K_{p,p},$ not containing $v$ (otherwise $\sn(H)=4$).
    
    Smallest critical sets for some corresponding colourings $c$ have been represented in Figure~\ref{fig:snH<snG} for the case $p=5,$
    with the partial colouring in black and the remaining colours in red.
    Then one can check that $\sn(H)=3$ and $\underline{\lcs}(H) \le \lcs(H,c)=4.$
    For the verification, note that the vertices coloured $0$ in black belong to any critical set and at least one vertex of the triangle is needed (and at most $2$).
    On the other hand, $\sn(G)=\underline{\lcs}(G)=p+1.$
\end{proof}

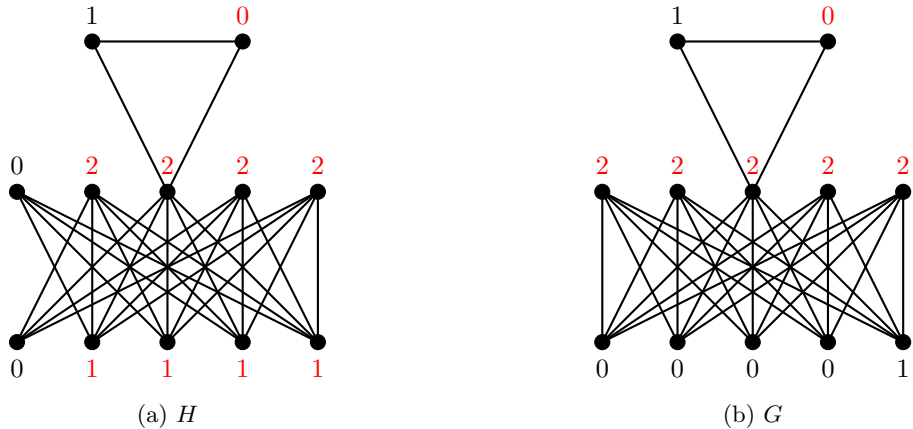
\begin{figure}[h]

\begin{minipage}[b]{.45\linewidth}
\begin{center}
    \begin{tikzpicture}
    {

    \foreach \x in {0,1,2,3,4}
    {
	\ifnum\x>0
	\foreach \y in {0,1,2,3,4}
	{
	\draw[thick] (\y,0) -- (\x,2);
	}		
	\draw[thick] (\x,0) -- (0,2);
	\fi
	;}

    \foreach \x in {0,1,2,3,4}{\draw[fill] (\x,2) circle (0.1);}
	\foreach \x in {0,1,2,3,4}{\draw[fill] (\x,0) circle (0.1);}
    \draw[fill] (1,4) circle (0.1);
	\draw[fill] (3,4) circle (0.1);	
	\draw[thick] (1,4)--(3,4)--(2,2)--cycle;
	\coordinate [label=center: $1$] (A) at (1,4.35);
    \coordinate [label=center: $0$] (A) at (0,2.35);
    \coordinate [label=center: $0$] (A) at (0,-0.35);
    \coordinate [label=center: \textcolor{red}{0}] (A) at (3,4.35);
    
    \foreach \x in {1,2,3,4}{\coordinate [label=center: \textcolor{red}{2}] (A) at (\x,2.35);
    \coordinate [label=center: \textcolor{red}{$1$}] (A) at (\x,-0.35);}
    }
	\end{tikzpicture}
\subcaption{$H$}
\label{fig:H1}
\end{center}
\end{minipage}\quad\begin{minipage}[b]{.45\linewidth}

\begin{center}
    \begin{tikzpicture}
    {
	\foreach \x in {0,1,2,3,4}{\foreach \y in {0,1,2,3,4}{
	\draw[thick] (\y,0) -- (\x,2);}				
	;}
	
    \foreach \x in {0,1,2,3,4}{\draw[fill] (\x,2) circle (0.1);}
	\foreach \x in {0,1,2,3,4}{\draw[fill] (\x,0) circle (0.1);}
    \draw[fill] (1,4) circle (0.1);
	\draw[fill] (3,4) circle (0.1);	
	\draw[thick] (1,4)--(3,4)--(2,2)--cycle;
	
	\coordinate [label=center: $1$] (A) at (1,4.35);
	\foreach \x in {0,1,2,3}{
	\coordinate [label=center: $0$] (A) at (\x,-0.35);
	}
	\coordinate [label=center: $1$] (A) at (
	4,-0.35);
	
	 \coordinate [label=center: \textcolor{red}{0}] (A) at (3,4.35);
    
    \foreach \x in {0,1,2,3,4}{\coordinate [label=center: \textcolor{red}{2}] (A) at (\x,2.35);
    }
	}
	\end{tikzpicture}\\
\subcaption{$G$}
\end{center}
\end{minipage}
\caption{Example of $H \subset G$ with $\sn(H)<\sn(G)$}\label{fig:snH<snG}
\end{figure}

\begin{comment}
    \begin{figure}[h]
\begin{center}
    \begin{tikzpicture}
    {
	\foreach \x in {0,1,...,8}{
	\draw[thick] (40*\x:2) --(40*\x+40:2);
	}				
	
    \draw[thick] (0:2) --(160:2);
    \foreach \x in {0,1,...,8}
    {
    \draw[fill] (40*\x:2) circle (0.1);
    }
    
    \coordinate [label=center: $1$] (A) at (80:2.3);
	\coordinate [label=center: $2$] (A) at (0:2.3);
    \coordinate [label=center: $3$] (A) at (200:2.3);
	\coordinate [label=center: $1$] (A) at (280:2.3);
	}
	\end{tikzpicture}
\end{center}
\caption{Example of $C_9 \subset G$ with $\sn(C_9)>\sn(G)$}\label{fig:snH>snG}
\end{figure}
\end{comment}

\section{Hypergraph-versions for Sudoku and Latin squares}\label{sec:hypergraph}

The definition of the Sudoku number $\sn$, and also the three other variants, can be extended to hypergraphs, as has been done for Steiner systems in~\cite{BM21} for $\sn$.
A critical set for a pair $(H,c)$ of a hypergraph $H$ and colouring is again a minimal determining set, defined completely analogously. Here we focus on (weak) colourings of the hypergraph, i.e. no edge may be monochromatic.
For more info, see~\cite{BTV15}.
Natural examples are again e.g. the Sudoku and Latin square.
The Latin square hypergraph $K_n \square K_n$ can be defined as the hypergraphgraph on the $n^2$ vertices in a $n \times n$ grid, taking the vertices in the same row, or same column, as a hyperedge.
The hypergraph $\SUD_m$ is defined on $m^4$ vertices and $3m^2$ hyperedges (the initial $K_{m^2}$s) and is an extension of $K_{m^2} \square K_{m^2}.$
Here, we compute the $4$ parameters for these two examples.

\begin{exam}
When $m \ge 2$,
    $\overline{\scs}(\SUD_m)=\overline{\lcs}(\SUD_m)=m^4.$
    For $n \ge 4$, $\overline{\scs}(K_n \square K_n)=\overline{\lcs}(K_n \square K_n)=n^2.$
    For $n=3,$ $\overline{\scs}(K_n \square K_n)=\overline{\lcs}(K_n \square K_n)=6.$
\end{exam}

When the square has sidelengths $n, m^2$ at least $4$, one can take the chessboard-colouring that assigns alternately $0$ and $1$, to observe that $\overline{\scs}$ and $\overline{\lcs}$ are both equal to the order for both $K_n \square K_n$ and $\SUD_m.$
Those two parameters do not have to be equal to the order in general, e.g. $K_4^{(3)}$ (the $4$-clique with $4$ $3$-edges) has a essentially unique $2$-colouring and critical sets with $2$ vertices, and when $n=3$, the largest critical sets contain $6$ vertices, since in every row one can find at least one fixed vertex, and taking a diagonal with one colour and the remaining ones with the other colour, we see that $6$ can also be attained.

\begin{exam}
    For the Sudoku hypergraph $\SUD_2$, we have $\sn(\SUD_2)=\underline{\lcs}(\SUD_2)=9$.
\end{exam}

It has been verified by computer that $\sn(\SUD_2) \ge 9$ and 
the example at the left in Figure~\ref{fig:hypergraphSUD2coloring} gives an example for which the critical set (black numbers) is unique, being the complement of all fixed vertices (red numbers).

\begin{figure}[h]
\begin{minipage}[b]{.45\linewidth}
\begin{center}
\begin{tikzpicture}

  \begin{scope}
    \draw (0, 0) grid (4,4);
    \draw[very thick, scale=2] (0, 0) grid (2,2);

    \setcounter{row}{1}
    \setrow {0}{1 }  {0}{\textcolor{red}{1} }
    \setrow {0}{\textcolor{red}{1} }  {0}{ 0}

  \setrow {0}{ \textcolor{red}{0}}  {0}{\textcolor{red}{1} }
    \setrow {\textcolor{red}{1}}{ 1}  {\textcolor{red}{1}}{\textcolor{red}{0} }

  \end{scope}
  	\end{tikzpicture}
\end{center}

\end{minipage}\quad\begin{minipage}[b]{.45\linewidth}
\begin{center}
\begin{tikzpicture}

  \begin{scope}
    \draw (0, 0) grid (4,4);
    \draw[very thick, scale=2] (0, 0) grid (2,2);

    \setcounter{row}{1}
    \setrow {0}{1 }  {1}{1 }
    \setrow {0}{1 }  {0}{ 0}

  \setrow {0}{0}{1}{0}
    \setrow {1}{1 }  {1}{ 0}

  \end{scope}
\end{tikzpicture}\\
\end{center}
\end{minipage}
    \caption{Hypergraph $2$-colouring of $\SUD_2$ and $K_4 \square K_4$}
    \label{fig:hypergraphSUD2coloring}
\end{figure}

\begin{prop}
    For $m\ge 3$, $\sn(\SUD_m)=\underline{\lcs}(\SUD_m)=m^4-3m^2+3m$.
\end{prop}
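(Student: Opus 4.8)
The plan is to pin the common value down by a uniform lower bound valid for \emph{every} weak $2$-colouring, together with one explicit colouring attaining it. Write $N=m^2$, $n=N^2=m^4$, and recall $\chi(\SUD_m)=2$. First I would record that in any weak $2$-colouring $c$ with determining set $S$, every uncoloured vertex is fixed: were some $v\notin S$ recolourable while keeping $c$ proper, flipping $v$ would produce a second extension of $c|_S$. Thus, writing $U=V\setminus S$ and $F=F(c)$ for the set of fixed vertices, one always has $U\subseteq F$; and a cell of colour $a$ is fixed precisely when it is the unique $a$-cell in one of the three lines (row, column, box) through it. The whole lower bound therefore reduces to the counting statement $|F(c)|\le 3m(m-1)$ for all $c$ (when $m\ge 3$). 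Granting it, every determining set satisfies $|S|=n-|U|\ge n-3m(m-1)=m^4-3m^2+3m$, whence $\sn(\SUD_m)\ge m^4-3m^2+3m$, and since $\lcs(\SUD_m,c)\ge\scs(\SUD_m,c)$ for each $c$, also $\underline{\lcs}(\SUD_m)\ge m^4-3m^2+3m$.

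For the matching upper bound I would build a single colouring $c^{\ast}$ whose fixed set $F^{\ast}$ has size exactly $3m(m-1)$ and lies in one colour class. Being monochromatic it is independent (no line is monochromatic in a weak colouring), so by Observation~\ref{obs:1} the set $V\setminus F^{\ast}$ is determining; as $U\subseteq F^{\ast}$ forces $V\setminus F^{\ast}\subseteq S$ for every determining set $S$, it is in fact the \emph{unique} critical set of $c^{\ast}$. Hence $\scs(\SUD_m,c^{\ast})=\lcs(\SUD_m,c^{\ast})=m^4-3m^2+3m$, giving both upper bounds simultaneously and closing the squeeze. Concretely I would colour almost everything $0$ and insert the $1$'s so that $m-1$ rows in each band, $m-1$ columns in each stack, and the $m(m-1)$ off-diagonal boxes each carry a single, pairwise distinct $1$, while the $m$ remaining rows, $m$ remaining columns and $m$ diagonal boxes receive a few additional, non-fixing $1$'s that destroy every unintended near-monochromatic line yet keep each box containing both colours. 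Verifying that this incidence pattern is realisable and creates no extra fixed cell is routine but fiddly bookkeeping.

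The hard part is the counting bound $|F(c)|\le 3m(m-1)$. The naive union bound over the $3N$ lines only yields $|F|\le 3m^2$, and this cannot be improved line-by-line: all $3N$ lines can be thin at once (take the cells carrying one fixed symbol of a completed Sudoku square), so the extra $-3m$ must be extracted from \emph{coincidences} of minority cells. My approach is a band analysis: I would show that if all $m$ rows of a band are $1$-thin, then the $m$ boxes of that band are themselves forced to be thin with minority cells coinciding with the row minorities, so at least one line per band is ``wasted''; summing these deficits across the $m$ bands, and arguing symmetrically for columns and boxes and for both colours, should produce exactly the saving of $3m$. I expect the delicate point to be ruling out clever mixed-colour configurations that spread the coincidences unevenly and controlling the double counting between the three directions. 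Finally, note that $m\ge3$ is essential: for $m=2$ a line has only four cells and the argument breaks, matching the fact that $\SUD_2$ admits a colouring with $7>3\cdot2\cdot1=6$ fixed cells and the anomalous value $\sn(\SUD_2)=9$.
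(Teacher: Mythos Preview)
Your overall architecture --- reduce to the bound $|F(c)|\le 3m(m-1)$, then exhibit one colouring whose fixed set has exactly this size and is monochromatic so that its complement is the \emph{unique} critical set --- is precisely the paper's. The construction you sketch is the same one the paper draws for $m=3$ (with the cosmetic difference that the paper places the three non-thin boxes on the anti-diagonal rather than the diagonal); your observation that the unique-critical-set property yields both $\sn$ and $\underline{\lcs}$ at once is also how the paper implicitly argues.

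The gap is in your counting plan. ``One line wasted per band, then symmetrically for stacks'' recovers a saving of at most $2m$ from the naive $3m^2$, not the $3m$ you need, and your own caveat about mixed colours and double counting is exactly where this falls short. The paper fills this differently. From the same coincidence phenomenon you note, it extracts a stronger statement: if a box is thin with fixed cell $v$, then \emph{both} the row and the column through $v$, if thin at all, must have $v$ as their fixed cell (because the other $m-1\ge2$ cells of the box in that row/column force the minority colour). Thus a single band whose $m$ boxes are all thin absorbs $m$ rows \emph{and} $m$ columns simultaneously. The paper then does a global case split rather than a per-band sum: if there exist both a band and a stack all of whose boxes are thin, together they absorb $2m-1$ rows and $2m-1$ columns, giving $|F|\le m^2+2\bigl(m^2-(2m-1)\bigr)=3m^2-4m+2<3m(m-1)$ for $m\ge3$. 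Otherwise every band (or every stack) has at most $m-1$ thin boxes, so there are at most $m(m-1)$ thin boxes; a short further case check then bounds thin rows and thin columns by $m(m-1)$ each. The ``double kill'' of a row \emph{and} a column by each thin box is the ingredient your per-band bookkeeping is missing.
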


\begin{proof}
    Note that the vertices of the hypergraph $\SUD_m$ are the elements in the $m^2 \times m^2$ square and we will use element, cell or vertex for the same concept.
    
    Since the complement of a determining set only contains fixed vertices, it is sufficient to observe that the number of fixed vertices is bounded by $3m(m-1)$ and there is a colouring with a critical set of the desired form.
    Figure~\ref{fig:hypergraphSUD3coloring} shows such a colouring for $m=3$, but the same one extends to every value $m\ge 3$. 
    
    Call a $m\times m$ square fixed if all of its elements except from one, $v$, have the same colour. Analogously, we call a row or column fixed.
    The colouring in Figure~\ref{fig:hypergraphSUD3coloring} has $m(m-1)$ fixed $m \times m$ squares, columns and rows for which all elements different from the fixed one also do belong to the critical set.

    Now, we prove that there do not exist colourings with more than $3m(m-1)$ fixed vertices.
    In the row (or column) of the fixed vertex $v$ of a fixed $m \times m$ square, there are already $m-1 \ge 2$ elements coloured with the other colour, so if the row or column would be fixed, $v$ would be the associated fixed vertex.
    
    Consider the $m \times m$ squares within the $m^2 \times m^2$ square.
    Every $m$ horizontal $m \times m$ squares form a horizontal line (with $m^3$ cells) and similarly there are $m$ vertical lines.
    
    First assume there is at least a vertical line and a horizontal line that each contain $m$ fixed $m\times m$ squares.
    But then the $m$ fixed vertices do have to belong to $m$ different rows resp. columns and so there are at least $2m-1$ columns and rows that do not contain a vertex that is fixed for that row or column (different from the one fixed for its $m\times m$ square).
    So in this case, there would be at most $3m^2-4m+2<3m(m-1)$ fixed vertices.
    As the latter is not the case, there are no more than $m(m-1)$ fixed $m\times m$ squares.
    If there is at least one vertical line (or horizontal one) containing only fixed $m \times m$ squares, we can conclude immediately as there are at most $m(m-1)$ fixed rows and columns as well (and equality cannot occur for the $3$ types). 
    Now, as long as there are more than $m(m-2)$ fixed $m \times m$ squares, there are also $m$ rows and columns that are not fixed.
    If there are more than $(m-1)(m-2),$ there are at least $m-1$ rows (and columns as well) that are not fixed. So we can conclude in all cases.
\end{proof}

\begin{figure}
 \centering
\begin{tikzpicture}

    \begin{scope}
    \draw (0, 0) grid (9, 9);
    \draw[very thick, scale=3] (0, 0) grid (3, 3);
    
    \setcounter{row}{1}
    \setrowup {0}{0 }{0}  {0 }{0}{0 }  {0}{0}{}
    \setrowup {0}{0 }{0}  {0 }{0}{0 }  {0}{0}{}
    \setrowup {0}{0 }{}  {0 }{0}{ }  {}{}{0}
    
    \setrowup {0}{0 }{0}  {0 }{0}{ }  {0}{0}{0}
    \setrowup {0}{0 }{0}  {0 }{0}{ }  {0}{0}{0}
    \setrowup {0}{0 }{}  {}{}{0} {0 }{0}{ }  
    
    \setrowup {0}{0 }{}  {0 }{0}{0 }  {0}{0}{0}
    \setrowup {0}{0 }{}  {0 }{0}{0 }  {0}{0}{0}
    \setrowup  {}{}{0}    {0}{0 }{}  {0 }{0}{ } 
    
    \begin{scope}[red, font=\sffamily\slshape]
      \setcounter{row}{1}
      \setrowup {}{ }{}  { }{}{ }  {}{ }{1}
      \setrowup {}{ }{}  { }{}{ }  {}{ }{1}
      \setrowup {}{ }{1}  { }{}{1 }  {1}{1 }{}
      
    \setrowup {}{ }{}  { }{}{ 1}  {}{ }{}
      \setrowup {}{ }{}  { }{}{1 }  {}{ }{}
      \setrowup {}{ }{1}  {1 }{1}{ }  {}{ }{1}
      
    \setrowup {}{ }{1}  { }{}{ }  {}{ }{}
      \setrowup {}{ }{1}  { }{}{ }  {}{ }{}
      \setrowup {1}{ 1}{}  { }{}{1 }  {}{ }{1}
    \end{scope}

\end{scope}
\end{tikzpicture}
   \caption{Hypergraph $2$-colouring of $\SUD_3$}
    \label{fig:hypergraphSUD3coloring}
\end{figure}

\begin{exam}
For $n\ge 4$, 
$\sn(K_n \square K_n)=\underline{\lcs}(K_n \square K_n)=(n-1)^2.$
For $n \in \{2,3\}$, these parameters equal $1$ and $4$ respectively.
\end{exam}

For $n \ge 4$, colour a $(n-1)\times (n-1)$-square of the Latin square with one colour, then the remaining $2n-1$ vertices are fixed. As such there is a unique critical set for this colouring.
Furthermore, a vertex can only be fixed if it is the unique vertex coloured with one colour in a hyperedge. Hence there are at most $2n$ fixed vertices, and these should not be fixed for both its row and column. 
The latter is impossible, except for $n=4.$
Without loss of generality, $(1,1)$ is coloured $0$ while the remaining of its row is completely coloured $1$ and its column is coloured $0$ except from $(n,1).$
The row of $(n,1)$ is coloured fully $1,$ except from one element coloured zero, without loss of generality $(n,n)$. 
Then the $n^{th}$ column has to be coloured fully zero, except from $(1,n).$
Rows $2$ till $n-1$ have two vertices coloured in $0$ and for every such row, all vertices except from one are coloured $0.$
For columns $2$ till $n-1$, all the vertices in the column are coloured $1$, except from one vertex.
If $n-2>2$, this gives a contradiction.
For $n=4$, there are colourings with $8$ fixed vertices, but the complement of the fixed vertices is not a determining set (essentially unique; see right square in Figure~\ref{fig:hypergraphSUD2coloring}).
For $n=3,$ one can take the right lower $3 \times 3$ square in the left figure of Figure~\ref{fig:hypergraphSUD2coloring} as an example.

\section{Final remarks}

In this paper, we investigated the behaviour of critical sets when the number of colours is allowed to be larger than the chromatic number of the graph.
We proved two basic characterizations for the parameters to be equal to the maximum, being $n-1$ respectively $n$. 
Here the case $\underline{\lcs}(G)=n-1$ seemed to be the exception where the structure is not as clear as in the other cases.

We proved that $\overline{\scs}$ and $\overline{\lcs}$ are both monotone non-decreasing in the number of colours, while that is not the case for $sn=\underline{\scs}$ and $\underline{\lcs}$.
This solved problem $3$ (mentioned in the conclusion section) of~\cite{CK14}.
We observed the same behaviour when looking to the subgraph relation for graphs with the same chromatic number, with the case $\overline{\lcs}$ being a conjecture.

Finally, we considered the same problem for hypergraphs with weak colourings and determined the Sudoku numbers for the Sudoku - and Latin square hypergraphs.

There are still some interesting open questions in the area.

The study of critical sets of Latin squares is attributed to J. Nelder in $1977$.
The conjecture that $\sn(K_n \square K_n) = \floorfrac{n^2}{4}$, as mentioned in e.g.~\cite{BvR99} is probably still one of the most intriguing ones.
Also the other $3$ parameters for Latin squares are unknown, see e.g.~\cite{HM03} where they prove that $\underline{\lcs}(K_n \square K_n) =n^2(1-o(1)).$
Determining $\sn(K_n \square K_n,k)$ or $\sn(\SUD_m,k)$ for $k>n$ or $k>m^2$ is a related question where the patterns might be equally interesting.

Having solved one of the $5$ problems in the conclusion of Cooper et al.~\cite{CK14}, there are still some remaining.
One of them being the question if $G$ has to be uniquely colourable if every critical set of $G$ has cardinality $\chi(G).$
We verified that it is true if $G$ can be coloured in essentially two ways (i.e. $G$ has $2\chi!$ possible colourings).
\begin{prop}
Assume $G$ can be coloured (up to isomorphism) in exactly two ways.
Then for any choice of $(G,c)$ there does exist a critical set with cardinality $\chi(G).$
\end{prop}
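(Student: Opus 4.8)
The plan is to work entirely with the two colour-partitions. Write $k=\chi(G)$; since $G$ is connected and not uniquely colourable we have $k\ge 3$, and because no $(k-1)$-colouring exists, every proper $k$-colouring uses all $k$ colours. Thus the $2\cdot k!$ colourings fall into two partition types, $\mathcal{P}$ (the partition induced by the given colouring $c$) and $\mathcal{P}'$, each with exactly $k$ nonempty parts, and $\mathcal{P}\neq\mathcal{P}'$. First I would reformulate the notion of a determining set for $(G,c)$ in these terms: a set $S$ is determining if and only if (D1) $c$ uses at least $k-1$ colours on $S$ --- this is exactly what kills the $k!-1$ nontrivial recolourings of $\mathcal{P}$ --- and (D2) the restrictions $\mathcal{P}|_S$ and $\mathcal{P}'|_S$ differ, i.e. there is a pair $x,y\in S$ with exactly one of $c(x)=c(y)$, $c'(x)=c'(y)$ holding, which is what forbids every permutation of $\mathcal{P}'$ from agreeing with $c$ on $S$. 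Call such a pair a \emph{witness}.

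Next I would pin down what a critical set of size exactly $k$ must look like. Since (D1) forces $|c(S)|\ge k-1$, a size-$k$ critical set has $|c(S)|\in\{k-1,k\}$. A full $c$-transversal ($|c(S)|=k$, one vertex per colour) can never be minimal for $k\ge 3$: removing any vertex keeps at least $k-1$ colours, so no vertex is essential for (D1), hence every vertex would have to lie in every witness pair, which is impossible. Therefore a size-$k$ critical set covers exactly $k-1$ colours, so one colour class $a$ contains two of its vertices $x,y$ and the remaining $k-2$ vertices represent distinct other colours. Minimality then forces $\{x,y\}$ to be the \emph{unique} witness: the $k-2$ singletons are automatically essential for (D1), while $x,y$ can only be essential through (D2), so every witness must contain both of them. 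This reduces the whole statement to producing a set $R$ of $k$ vertices that is a transversal of the $c'$-classes (all $c'$-colours distinct) whose $c$-colours take exactly $k-1$ values; for such an $R$ the only coincidence of colours is the pair $\{x,y\}$ with $c(x)=c(y)$, $c'(x)\ne c'(y)$, which is then the unique witness, and $R$ is the desired critical set.

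To produce such an $R$ I would pass to the bipartite \emph{colour-incidence} graph $B$ on two copies of $[k]$, one indexed by $c$-colours and one by $c'$-colours, joining $i$ to $j$ whenever some vertex has $c$-colour $i$ and $c'$-colour $j$. A $c'$-transversal corresponds to choosing, for each $c'$-colour, an incident $c$-colour, and the number of distinct $c$-colours used is the image size of this choice. Since neither partition refines the other (equal part counts, yet $\mathcal{P}\ne\mathcal{P}'$), there is a $c$-class split by $c'$, which lets me force two $c'$-colours onto the same $c$-colour and so realise image size at most $k-1$; and a one-coordinate-at-a-time deformation shows the set of attainable image sizes is an interval. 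Hence it suffices to show that the maximum attainable image size, which equals the matching number $\nu(B)$, is at least $k-1$.

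The crux --- and the step I expect to be the main obstacle --- is exactly this bound $\nu(B)\ge k-1$. I would argue by contradiction through König's theorem: a matching of size at most $k-2$ yields a vertex cover $X\cup Y$ with $X\subseteq[k]_c$, $Y\subseteq[k]_{c'}$ and $|X|+|Y|\le k-2$, meaning every vertex of $G$ has its $c$-colour in $X$ or its $c'$-colour in $Y$. Both $X$ and $Y$ must be nonempty, since otherwise $c$ or $c'$ would use at most $k-2$ colours. I then plan to build a third colouring by following $c$ on the vertices whose $c$-colour lies in $X$ and a suitably relabelled copy of $c'$ on the rest (possible because $[k]\setminus X$ comfortably accommodates the $c'$-colours in $Y$), and to check that the resulting proper colouring induces a partition distinct from both $\mathcal{P}$ and $\mathcal{P}'$. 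This would contradict $G$ having only two colourings, giving $\nu(B)\ge k-1$ and completing the proof. Verifying that this mixed colouring is genuinely a new type, rather than accidentally a permutation of $c$ or of $c'$, is the delicate point the write-up must handle with care.
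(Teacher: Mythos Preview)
Your plan is correct and would go through, but it is considerably more elaborate than the paper's argument, and the step you flag as ``delicate'' is in fact not delicate at all. The paper proceeds directly: it observes that Hall's condition holds for the two partitions (any $j$ of the $V_i$ meet at least $j$ of the $U_i$, since otherwise one could mix the two colourings to get a proper colouring with fewer than $k$ colours), so a \emph{common} transversal $x_1,\dots,x_k$ exists with $x_i\in V_i\cap U_i$. Since the partitions differ, some $V_1\cap U_2$ is nonempty; pick $y_2$ there and set $D=\{x_1,y_2,x_3,\dots,x_k\}$. This $D$ is exactly your $R$: a $c'$-transversal with $k-1$ distinct $c$-colours, and the verification that it is critical is the same as yours. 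No König, no interval argument, no hybrid-colouring construction is needed.

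What you call the crux---showing $\nu(B)\ge k-1$ via König and a mixed colouring---actually proves more, and more easily, than you realise. Your mixed colouring uses only the colours in $X\cup\sigma(Y)$, hence at most $|X|+|Y|\le k-2$ colours in total; this already contradicts $\chi(G)=k$ outright, with no appeal to the ``only two colourings'' hypothesis and no need to check that the new partition differs from $\mathcal P$ and $\mathcal P'$. (It trivially does, having fewer than $k$ parts, but that is beside the point.) In other words, your König argument really shows $\nu(B)=k$, i.e.\ Hall's condition, which is precisely the common transversal the paper starts from. So your route and the paper's converge, but you have taken a detour: the structural analysis of size-$k$ critical sets and the interval-of-image-sizes argument are sound but unnecessary once you know a full common transversal exists---just swap one coordinate of it, as the paper does.
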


\begin{proof}
Let $k=\chi(G)$ and let the two partitions in independent sets be $V=V_1 \cup V_2 \cup \ldots \cup V_k$ and $V=U_1 \cup U_2 \cup \ldots \cup U_k.$
Since $k=\chi(G),$ the union of $j$ sets $V_i$ intersect at least $j$ sets $U_i$ and vice versa.
In particular, by Hall's matching theorem, there exists a common transversal of the two partitions. Without loss of generality we may assume there are elements $(x_1,x_2 \ldots x_k)$ with $x_i \in U_i \cap V_i$ for every $1 \le i \le k.$
Since the partitions are different (there are some $i$ for which $U_i \not =V_i$), we may also assume without loss of generality that $V_1 \cap U_2 \not=\emptyset.$
Choose a vertex $y_2 \in V_1 \cap U_2.$
Let $c$ be a colouring for which the colour classes are precisely the $V_i.$
Now $D=\{x_1,y_2,x_3,x_4 \ldots x_k\}$ will be a determining set for $(G,c)$, since knowing that $x_1$ and $y_2$ have the same colour implies that we know the partition and we know the colours for each partition class (except one, which needs the remaining colour).
Furthermore it is a critical set, since deleting $x_1$ or $y_2$ implies that we cannot distinguish the two partitions, and deleting $x_i$ for some $i \ge 3$ from this set implies that there are only $k-2$ colours in the restriction of $c$ to $D\backslash x_i.$
\end{proof}

There are also two conjectures posed in this work.
Conjecture~\ref{conj:subgraphmonotonicity_for_olcs} would be the final case related to monotonicity of the subgraph-order.
Also resolving Conjecture~\ref{conj:sn=n-2_no_Kn} could be nice, having only two exceptional cases.
Proving the weaker form that $\sn(G)=n-2$ if $\chi(G)=k$ is sufficiently large implies that $G$ contains $K_k$ as a subgraph, could be considered as analogous behaviour to the extension of Brook's theorem in~\cite{Reed99}.

\section*{Acknowledgement}
The author would like to thank John Haslegrave for making time for a chat about this project, as well as the group of Lau et al.

\paragraph{Open access statement.} For the purpose of open access,
a CC BY public copyright license is applied
to any Author Accepted Manuscript (AAM)
arising from this submission.

\bibliographystyle{abbrv}
\bibliography{sn}

\end{document}